\title{On the discrepancies  of graphs}
\author[Balogh]{J\'ozsef Balogh}\address{Department of Mathematical Sciences, University of Illinois at Urbana-Champaign, Illinois, USA, 
and Moscow Institute of Physics and Technology,  Russian Federation.
}
\thanks{The first author's research is partially supported by NSF
Grant DMS-1500121 and DMS-1764123, Arnold O. Beckman Research Award (UIUC Campus Research Board RB 18132) and
the Langan Scholar Fund (UIUC)}
\email{jobal@illinois.edu}
\author[Csaba]{B\'ela Csaba}\address{Bolyai Institute, Interdisciplinary Excellence Centre, University of
Szeged, Hungary}
\email{bcsaba@math.u-szeged.hu}
\thanks{The second and fourth authors were partially supported by the NKFIH grants KH\_18 129597 and SNN 117879. 
The second author was also partially supported by  the Ministry of Human Capacities, Hungary, Grant 20391-
3/2018/FEKUSTRAT}
\author{Yifan Jing}
\address{Department of Mathematics, University of Illinois at Urbana-Champaign,  Illinois, USA}
\email{yifanjing17@gmail.com}
\author[Pluh\'ar]{Andr\'as Pluh\'ar}\address{Department of Computer
Science, University of
Szeged, Hungary}
\email{pluhar@inf.u-szeged.hu}
\begin{document}
\maketitle

\newtheorem{theorem}{Theorem}[section]
\newtheorem{lemma}[theorem]{Lemma}
\newtheorem{proposition}[theorem]{Proposition}
\newtheorem{corollary}[theorem]{Corollary}
\newtheorem{observation}[theorem]{Observation}
\newtheorem{conjecture}[theorem]{Conjecture}
\theoremstyle{definition}
\newtheorem{definition}[theorem]{Definition}
\newtheorem{example}[theorem]{Example}
\newtheorem{remark}{Remark}
\def\T{\mathfrak{T}}
\def\PP{\mathfrak{P}}
\def\D{\mathscr{D}}
\def\C{H}
\def\B{\mathcal{B}}
\def\ng{\widetilde{\mathsf{g}}}
\def\stime{\,\square\,}
\newcommand{\Case}[2]{\noindent {\bf Case #1:} \emph{#2}}
\newcommand{\Subcase}[2]{\noindent {\bf Subcase #1:} \emph{#2}}

\begin{abstract}
In the literature, the notion of discrepancy is used in several contexts, even in the theory of graphs. Here, for
a graph $G$, $\{-1, 1\}$ labels are assigned to the edges, and we consider a  family $\mathcal{S}_G$ of (spanning) subgraphs of certain types, among others 
 spanning trees, Hamiltonian cycles. As usual, 
we seek for bounds on the sum of the labels that hold for all elements of $\mathcal{S}_G$, for every labeling.    
\end{abstract}

\keywords{Keywords: spanning subgraphs, discrepancy, trees, paths, Hamilton cycles}

\subjclass{MSC numbers: 05C35, 05D10, 11K38}

\section{Introduction}
\label{sec:intro}
\thispagestyle{empty}

The thorough study of discrepancy theory started with Weyl \cite{Weyl} and quickly gained
several applications in number theory, combinatorics, ergodic theory, discrete geometry, 
statistics etc, see the monograph of Beck and Chen \cite{BeckChen} or the book chapter
by Alexander and Beck \cite{ABC}.

We touch upon only the combinatorial discrepancy of hypergraphs. Given a hypergraph 
$(X, E)$, and a mapping $f: X \rightarrow \{-1, 1\}$, for an edge $A \in E$ let 
$f(A):=\sum_{x \in A} f(x)$. The discrepancy of $f$ is $\D(X, E, f)=\max_{A \in E} |f(A)|$, 
while the discrepancy of the hypergraph $(X, E)$

$$\D(X, E):= \min_{f} \D(X, E, f).$$  
  
In our case $X=E(G)$ and $E=\mathcal{S}_G \subset 2^{E(G)}$, and with a slight abuse 
of notation we write $\D(G, \mathcal{S}_G)$ for short. 	
	
Erd\H{o}s, F\"uredi, Loebl, and S\'os \cite{EFLS} studied the case $G=K_n$, the complete graph on $n$ vertices, and 
$\mathcal{S}_G$ is the set of copies of a fixed spanning tree $T_n$ with maximum degree $\Delta$.
They showed the existence of a constant $c>0$, such that $\D(G, \mathcal{S}_G) > c(n-1-\Delta)$. 

 Erd\H{o}s and Goldberg \cite{EGPS} defined ${\rm dis}(A, B):=e(A, B)- e(G)|A||B|/\binom{n}{2}$,
where $A, B \subset V(G)$ and $A \cap B=\varnothing$. They showed that for every $\varepsilon >0$ there exists
an $\varepsilon' >0$ such that in every graph $G$ with $e=e(G) > v(G)=n$, there are disjoint sets $A, B \subset V(G)$,
$|A|, |B| \leq \varepsilon n$, and ${\rm dis}(A, B) > \varepsilon' \sqrt{en}$.

Here we investigate the discrepancy of (spanning) trees, paths and Hamilton cycles. That is for a graph $G$
let $\mathcal{S}_G$ be the set of spanning trees ($\mathcal{T}_n$), trees ($\mathcal{T}$),
Hamiltonian paths ($\mathcal{P}_n$), paths ($\mathcal{P}$), or Hamilton cycles ($\mathcal{H}$). 

Usually, one expects big discrepancy if the hypergraph has 
 many edges. Since for every graph $G$, either $G$ or $\overline G$ is connected, we have
$\D(K_n, \mathcal{T}_n)=n-1$. Beck~ \cite{BeckPath}  showed that there is a graph $F$ on 
$n$ vertices and $2n$ edges such that in every two-coloring of its edge set there exists a monochromatic path 
of length $cn$, that is $\D(F, \mathcal{P}) = cn$. Another example for this is the
interpretation of the result of Burr, Erd\H{o}s and Spencer \cite{BES}, namely that
$R(mK_3, mK_3)=5m$. That is if $k\cdot K_3$ is the set of triangle factors in $K_n$,
$n=3k$ and $n$ is divisible by $5$, then $\D(K_n, k\cdot K_3) = n/5$. 


We first consider the discrepancy of  Hamilton cycles, and show that, roughly speaking, if $G$ has sufficiently large minimum degree then for every labeling of $E(G)$ with $+1,-1$ there is a Hamilton cycle with linear discrepancy.

\begin{theorem}\label{thm:2.1}
Let  $c>0$ be  an arbitrarily small constant and $n$ be sufficiently large. Let $G$ be a graph of order $n$ with $\delta(G)\geq(3/4+c)n$.
Then we have $\D(G,\mathcal{H})\geq cn/32.$ 
\end{theorem}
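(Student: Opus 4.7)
Fix a labeling $f:E(G)\to\{\pm 1\}$, set $E^+=f^{-1}(1)$ and $E^-=f^{-1}(-1)$, and (by replacing $f$ with $-f$ if necessary) assume $|E^+|\ge|E^-|$. Since $\delta(G)\ge(3/4+c)n>n/2$, Dirac's theorem gives a Hamilton cycle, so we may let $H^*$ be a Hamilton cycle of $G$ that maximizes $f(H)$; the goal is to show $f(H^*)\ge cn/32$. I argue by contradiction: assume $f(H^*)<cn/32$, so that $H^*$ contains at least $m\ge n/2-cn/64$ edges of label $-1$, and I will produce a Hamilton cycle $H'$ of $G$ with $f(H')>f(H^*)$, violating maximality.

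\textbf{The 2-switch move.} Orient $H^*=v_1v_2\cdots v_nv_1$ and write $e_i=v_iv_{i+1}$ (indices modulo $n$). For any distinct indices $i,j$ with $v_iv_j,\,v_{i+1}v_{j+1}\in E(G)$, the operation that removes $e_i,e_j$ from $H^*$ and inserts $v_iv_j$ and $v_{i+1}v_{j+1}$ (reversing one of the two arcs of $H^*\setminus\{e_i,e_j\}$) produces a new Hamilton cycle $H'$ of $G$ with
\[
f(H')-f(H^*)=f(v_iv_j)+f(v_{i+1}v_{j+1})-f(e_i)-f(e_j).
\]
To contradict the maximality of $H^*$ it suffices to find a pair $(i,j)$ for which the right-hand side is strictly positive; the cleanest cases are $f(e_i)=f(e_j)=-1$ together with at least one of $v_iv_j,\,v_{i+1}v_{j+1}$ lying in $E^+$, or $f(e_i)=-1,\,f(e_j)=+1$ together with both new edges in $E^+$.

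\textbf{Counting and main obstacle.} The hypothesis $\delta(G)\ge(3/4+c)n$ bounds the non-neighbors of each vertex by $(1/4-c)n$, so for each $i$ at most $(1/2-2c)n$ indices $j$ are ``blocked'' by a missing switch edge. Letting $M$ index the $-1$-edges of $H^*$ (so $|M|=m$), the number of unblocked pairs $(i,j)\in M\times M$ with $i\ne j$ is at least $m\bigl(m-(1/2-2c)n\bigr)\ge m\cdot(2c-c/64)n=\Omega(cn^2)$. I intend to show that among these some pair admits an increasing switch: a direct double count of ``bad'' unblocked pairs (those with both new edges in $E^-$) forces $|E^-|=\Omega(cn^2)$, which I then aim to contradict by simultaneously counting mixed increasing switches $(E^-,E^+)\to(E^+,E^+)$ and by invoking $|E^+|\ge|E^-|$ together with the edge-count estimate $e(G)\ge(3/8+c/2)n^2$. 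The \emph{main obstacle} is that $|E^+|\ge|E^-|$ is a purely global average — a single ``$-1$-heavy'' vertex $v$ can still have $d^-(v)$ as large as $n$ — so the counting must be carried out vertex by vertex. My plan is to preprocess by partitioning $V$ into vertices of local $+1$-majority ($d^+(v)\ge d^-(v)$) versus local $-1$-majority; using $|E^+|\ge|E^-|$, the $+1$-majority part has size at least $n/2$, and I run the switch argument on the large subset of $-1$-edges of $H^*$ whose endpoints lie in this majority set, where the $+1$-edge density is guaranteed to be high enough to supply a good switch. The $3/4+c$ threshold on $\delta(G)$ is precisely what makes this vertex-level counting close with the claimed margin $cn/32$.
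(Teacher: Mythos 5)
Your proposal is an outline rather than a proof, and the step that would carry all the weight is exactly the one left as a plan. You reduce the theorem to: ``if $H^*$ maximizes $f$ and $f(H^*)<cn/32$, then some $2$-switch improves $H^*$.'' But $2$-switch optimality of $H^*$ together with $m\ge n/2-cn/64$ negative edges only yields, via your own count, that all $\Omega(cn^2)$ available switch-edge pairs between negative edges of $H^*$ are negative--negative, i.e.\ $|E^-|=\Omega(cn^2)$. That is nowhere near a contradiction with $|E^+|\ge|E^-|$, since both classes can have size $\Theta(n^2)$ while $|E^-|\ge cn^2$. The preprocessing you propose does not close this: in the tight example $G=K_n-K_{n/4}$ with all edges at $V_1$ labelled $-1$, every vertex of $V_2$ has local $+1$-majority, yet every negative edge of every Hamilton cycle has exactly one endpoint in that majority set --- so ``the large subset of $-1$-edges of $H^*$ with both endpoints in the majority set'' can be empty, and every Hamilton cycle there is $2$-switch-optimal with $f=0$. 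Nothing in your sketch explains what additional switch becomes available once $\delta$ rises from $3n/4$ to $(3/4+c)n$. A separate, more structural worry: after normalizing $|E^+|\ge|E^-|$ you aim to prove the one-sided statement $\max_H f(H)\ge cn/32$, which is strictly stronger than $\max_H|f(H)|\ge cn/32$; you give no argument that the global edge majority forces the \emph{maximum} (rather than just the extreme) of $f$ over Hamilton cycles to be large.

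The paper's proof takes an entirely different, two-sided route. It classifies vertices as $a$-balanced or not (with $a=c/4$) and splits into cases. In the main case it builds, via a simple path-in-a-dense-subgraph lemma, a long monochromatic (negative) path $P$ of length $\Omega(cn)$, reserves for each vertex of $P$ a disjoint edge in its neighborhood of prescribed sign, deletes $P$, and uses P\'osa's strengthening of Dirac's theorem to find a Hamilton cycle $\C$ of $G-V(P)$ through all reserved edges. It then completes $\C$ to a Hamilton cycle of $G$ in two ways --- splicing in $P$ as a whole (which drags $f$ down by $|P|$) or inserting the vertices of $P$ one by one through the reserved edges (which does not decrease $f$) --- and concludes that the two resulting cycles have $f$-values differing by $\Omega(cn)$, so one of them has $|f|=\Omega(cn)$. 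This ``two completions of the same partial structure'' comparison is the idea your local-search plan is missing, and it is what lets the paper avoid ever having to contradict a global edge count.
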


Figure \ref{fig:3/4} below shows that the minimum degree condition in Theorem~\ref{thm:2.1} is the best possible. In this example, let $G=K_n-K_{n/4}$, i.e., $|V(G)|=n$ is divisible by $4$,  $|V_1|=n/4, |V_2|=3n/4$,  $\delta(G)=3n/4$. Assign $-1$ to all edges incident to $V_1$ and $+1$ to the rest of the edges. As each Hamilton cycle in $G$ touches $V_1$ exactly $n/4$ times, they all have zero discrepancy. 
\begin{figure}[h]
\centering
\includegraphics[width=2.45in]{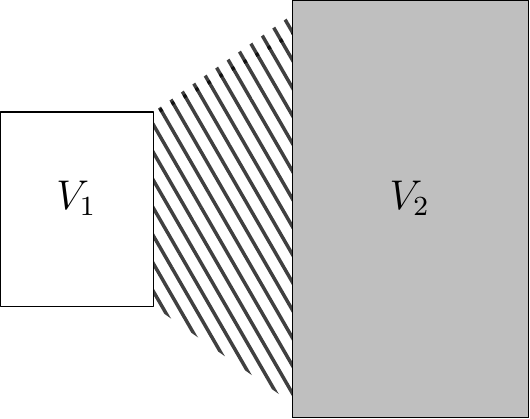}
\caption{$G$ with $\delta(G)=3n/4$ and zero Hamilton cycle discrepancy.}\label{fig:3/4}
\end{figure}

For the existence of a Hamilton cycle,  Dirac's Theorem requires only minimum degree $n/2$. We could also push down the minimum degree requirement for the existence of a linear discrepancy Hamilton cycle, if we have some local restriction on the coloring.

For  $\nu>0$ real number, we say a vertex is $\nu$-\emph{balanced} if it has at least $\nu n$ edges with label $+1$, and at least $\nu n$ edges of label $-1$, otherwise it is $\nu$-\emph{unbalanced}. 
\begin{theorem}\label{thm:main}
Let  $c, d, \nu$ be  positive numbers satisfying  $c\geq8\nu$ and $d\geq 4\nu$. Let $G$ be a graph of order $n$, where $\delta(G)\ge (1/2+c)n$.  Assume that the edges of $G$ are labelled by either $+1$ or $-1$, such that the number of $\nu$-balanced vertices is at least $(3/4+d)n$. Then there exists a Hamilton cycle in $G$ with discrepancy at least $\nu^2 n/500.$
\end{theorem}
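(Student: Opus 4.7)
By the symmetry of the labels $+1 \leftrightarrow -1$, it is enough to exhibit a Hamilton cycle $C$ of $G$ with $f(C):=|E^+(C)|\geq n/2+\nu^2 n/1000$; its discrepancy is then at least $\nu^2 n/500$. Let $B$ be the set of $\nu$-balanced vertices, so $|B|\geq (3/4+d)n$, and set $U:=V(G)\setminus B$. Since $\delta(G)\geq n/2$, Dirac's theorem gives a Hamilton cycle, so we may let $C^{*}=v_1v_2\cdots v_nv_1$ maximize $f$ over all Hamilton cycles of $G$. Assume for contradiction that $f(C^{*})<n/2+\nu^2 n/1000$; we shall exhibit a local modification of $C^{*}$ that strictly increases $f$, contradicting maximality.

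The local move is a \emph{2-opt swap}: pick two non-adjacent edges $v_iv_{i+1},v_jv_{j+1}$ of $C^{*}$, delete them, and add $v_iv_j,v_{i+1}v_{j+1}$, reversing the arc $v_{i+1}\cdots v_j$ in between. This produces another Hamilton cycle provided both new edges lie in $E(G)$, and the change in $f$ is an integer in $\{-2,\ldots,2\}$. First, $|E^-(C^{*})|=n-f(C^{*})>n/2-\nu^2 n/1000$; and since at most $2|U|\leq (1/2-2d)n$ edges of $C^{*}$ are incident to $U$, the set $M$ of $-1$-edges of $C^{*}$ whose two endpoints both lie in $B$ satisfies $|M|\geq 2dn-\nu^2 n/1000\geq dn$, using $d\geq 4\nu$.

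For a pair $(e,e')\in M\times M$, the induced swap is \emph{improving} iff both added edges lie in $E(G)$ and at least one is labelled $+1$. A single improving pair yields the contradiction. Each endpoint of an edge of $M$ lies in $B$ and thus has $\geq\nu n$ positive neighbours and $\geq(1/2+c)n$ neighbours in $G$. A double count over $M\times M$, using $c\geq 8\nu$ to ensure that the ``in-$E(G)$'' and the ``positive-label'' conditions can be satisfied simultaneously on a non-negligible portion of pairs, produces $\Omega(\nu^2 n^2)$ improving pairs, which in particular is strictly positive.

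\textbf{Main obstacle.} The heart of the argument is this double count. Fixing a single $e\in M$ and pigeonholing over partners $e'\in M$ does not work: since $c$ may be as small as $8\nu$, the naive bound on the number of partners with both new edges in $E(G)$ reduces to $|M|-(1-2c)n$, which can be negative. One must instead average over $e\in M$, using the density of $G$-edges inside the vertex set of $M$, and only \emph{afterwards} impose the $+1$-label condition; the latter has density $\Theta(\nu)$ at each balanced endpoint and accounts for one factor of $\nu$ in the final count, while the second factor of $\nu$ comes from the other endpoint of the new edge being forced to be $+1$-labelled. Turning this averaged count into the quantitative bound $\Omega(\nu^2 n^2)$ under the precise constants $c\geq 8\nu$ and $d\geq 4\nu$ is where the bulk of the technical work will lie.
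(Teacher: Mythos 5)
There is a genuine gap, and it sits exactly where you have placed the ``main obstacle'': the double count over $M\times M$ is not just technically delicate, it cannot be made to work in the form you describe. A 2-opt swap on $e=v_iv_{i+1}$, $e'=v_jv_{j+1}$ requires the two specific chords $v_iv_j$ and $v_{i+1}v_{j+1}$ to be edges of $G$. The tails $\{v_i : v_iv_{i+1}\in M\}$ form a set $T$ of size $|M|\approx dn$, which can be far below $(1/2-c)n$; since $\delta(G)\ge(1/2+c)n$ permits independent sets of size up to $(1/2-c)n$, the graph $G[T]$ may contain \emph{no edges at all}, in which case not a single pair in $M\times M$ admits a valid swap. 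Averaging over $e\in M$ does not rescue this, because there is no lower bound on the edge density of $G$ inside a prescribed set of size $o(n/2)$. The same problem recurs, independently, for the $+1$-label requirement: a balanced vertex has only $\nu n$ positive neighbours, all of which may avoid the relevant $dn$-sized endpoint set. So the count of improving pairs can genuinely be zero, and the contradiction with maximality of $C^{*}$ never materialises. (A further, smaller concern: even if local 2-opt moves were always available, it is not clear that 2-opt-local-optimality of the global maximiser forces $f(C^{*})$ up to $n/2+\Omega(\nu^2 n)$ rather than merely ruling out a specific pattern.)

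The paper's proof is structured precisely to avoid looking for chords inside small prescribed sets. It classifies vertices by the triangles they lie in (red, blue, dark red, dark blue, according to the change in discrepancy when an edge $uw$ is replaced by a path $uvw$), each colour being witnessed by $\nu n^2$ triangles, so that ``pivot'' edges can always be chosen greedily as a linear forest. It then uses the P\'osa-type Lemma~\ref{lem:posa} to route a Hamilton cycle through that linear forest, and gains discrepancy by comparing two insertion strategies for a long monochromatic path (insert it wholesale versus vertex-by-vertex through the pivots). The case analysis on the sizes of the colour classes, culminating in the construction of the auxiliary graph $K$ on the typical vertices of $B\cup C$, is what exploits the hypothesis that more than $(3/4+d)n$ vertices are balanced -- a hypothesis your argument uses only to lower-bound $|M|$, which is too weak given that the $3n/4$ threshold is tight (Figure~\ref{fig:3/4}). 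If you want to salvage a local-modification approach, you would need moves whose new edges are chosen from $\Omega(n)$-sized or $\nu n^2$-sized option sets, which is essentially what the paper's path-insertion mechanism provides.
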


The number of the balanced vertices in the graph in Figure~\ref{fig:3/4} is $3n/4$, hence the condition on the size of the balanced set in Theorem \ref{thm:main} is tight.


In both of the theorems above, $G$ is dense. However, the sparsity of a graph does not imply small discrepancy, the expansion is  a more important 
factor. Let $G \in \mathcal{G}_{n, d}$ be a randomly, uniformly selected  $d$-regular graph on $n$ vertices.
A property $\mathcal{P}$ holds with high probability,  w.h.p., if for every $\varepsilon >0$
there exist an $n_\varepsilon$ such that 
${\rm Pr}(G \in \mathcal{G}_{n, d}, G \in \mathcal{P}) \geq 1-\varepsilon$.
Similarly, property $\mathcal{P}$ holds asymptotically almost surely, a.a.s., if 
$\lim_{n \rightarrow \infty} {\rm Pr}(G \in \mathcal{G}_{n, d}, G \in \mathcal{P})=1$.

\begin{theorem}\label{r3reg} 
Let $G \in \mathcal{G}_{n, 3}$. Then there exists a constant $c>0$ such that a.a.s.~we have $\D(G, \mathcal{T}_n) \geq c n$.
\end{theorem}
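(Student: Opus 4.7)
The plan is to combine a matroid-theoretic reduction with structural and spectral properties of random $3$-regular graphs.

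For any labeling $f\colon E(G)\to\{+1,-1\}$, let $G^{\pm}$ be the subgraphs of $G$ spanned by the $\pm 1$-edges, with $k$ and $\ell$ connected components respectively. A maximum spanning forest of $G^+$ has $n-k$ edges; completing it to a spanning tree of $G$ using $k-1$ (necessarily $-1$) edges gives a tree with discrepancy $n-2k+1$, and symmetrically $n-2\ell+1$ via $G^-$. Hence
\[
\max_{T\in\mathcal{T}_n}|f(T)| \;\geq\; \max\bigl(n-2k+1,\ n-2\ell+1\bigr),
\]
so it suffices to show that, a.a.s., every labeling of $G$ satisfies $\min(k,\ell)\leq(\tfrac12-c)n$ for some absolute $c>0$.

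To analyze the constraints imposed when $k$ and $\ell$ are both close to $n/2$, classify vertices by $d^+(v)\in\{0,1,2,3\}$ and set $a_i=|V_i|$ with $V_i=\{v:d^+(v)=i\}$. Since every non-trivial $G^+$-component has at least two vertices, a direct count gives $k=\tfrac12(n+a_0)-\delta^+$, where $\delta^+\geq 0$ is the total excess size of non-trivial $G^+$-components over $2$. Hence $k\geq(\tfrac12-\epsilon)n$ forces $\delta^+\leq a_0/2+\epsilon n$, which in turn forces $a_1\geq(1-6\epsilon)n-4a_0$ from the size-$2$ components of $G^+$. The mirror argument on $\ell$ yields $a_2\geq(1-6\epsilon)n-4a_3$. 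Summing and using $\sum_i a_i=n$ yields the key inequality
\[
a_0+a_3 \;\geq\; \tfrac{1-12\epsilon}{3}\,n.
\]
Since an edge between $V_0$ and $V_3$ would require both labels, $V_0\cap V_3=\varnothing$ and $e_G(V_0,V_3)=0$. Moreover, in the regime $\max(a_0,a_3)<\tfrac14 n$ the size-$2$ components of $G^{\pm}$ give near-perfect matchings in $G[V_1]$ and $G[V_2]$.

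The main obstacle is then to show that a random $3$-regular graph a.a.s.\ admits no such configuration. My plan is a two-regime first-moment estimate in the configuration model. When $\max(a_0,a_3)<\alpha n$ for a small fixed $\alpha>0$, the matching constraints on $G[V_1]$ and $G[V_2]$ together with the factor $\exp(-3|V_0||V_3|/n)$ from $e_G(V_0,V_3)=0$ should suppress the expected count of admissible partitions $V=V_0\cup V_1\cup V_2\cup V_3$ to $o(1)$; the naive bound using only the ``no $V_0$--$V_3$ edge'' constraint is insufficient because expander mixing for random $3$-regular graphs ($\lambda_2\sim 2\sqrt 2$) becomes vacuous well above our threshold $n/6$, so the matching factors are essential. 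When instead $\max(a_0,a_3)\geq\alpha n$, I would exploit the edge-isoperimetric property $h(G)\geq h_0>0$ (a.a.s.\ for random $3$-regular graphs) together with the deterministic cycle-space bound $c^++c^-=k+\ell-n/2\leq n/2+1$ to derive stronger edge-count restrictions, yielding $|E^\pm|\leq(1+\epsilon)n+1$ that combine with $|E^{\mp}|\geq(3+h_0)|V_{0,3}|/2$ to contradict $k,\ell\geq(\tfrac12-\epsilon)n$. The technical heart of the argument, and the main difficulty I foresee, is the borderline case $\max(a_0,a_3)\approx\alpha n$ where neither the matching constraints nor the isoperimetric constraints alone are decisive and a careful interpolation between the two is required.
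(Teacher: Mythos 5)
Your opening reduction is sound and is exactly the paper's starting point: taking a spanning forest of $G^{+}$ and completing it with $k-1$ negative edges gives a tree of discrepancy $n-2k+1$, so one only needs to push the number of components of (say) the majority-colour subgraph below $(1/2-c)n$. Your degree-type classification and the deduction that both $k,\ell\geq(1/2-\epsilon)n$ forces $a_0+a_3\geq(1-12\epsilon)n/3$ with $e_G(V_0,V_3)=0$ and near-matchings on $V_1,V_2$ is also essentially correct (modulo a factor of $2$ in $k=\tfrac12(n+a_0)-\tfrac12\delta^{+}$). The problem is that everything after that is a plan, not a proof, and it is the part that carries all the difficulty. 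You yourself flag the borderline regime as unresolved; but the issues are worse than a borderline case. The constraint ``two disjoint sets with $|V_0|+|V_3|\geq n/3$ and no edges between them'' is not even a.a.s.\ violable by a first moment: it is satisfied \emph{deterministically} in every $3$-regular graph (take any $S$ with $|S|=n/6$ and $T=V\setminus(S\cup N(S))$, so $|T|\geq n/3$), so the entropy of the partition necessarily overwhelms the probability gain from $e(V_0,V_3)=0$, as you note. The matching constraints you propose to add do not obviously rescue this: at the relevant parameter values $a_1\approx a_2\approx n/3$, the number of edges a near-perfect matching forces inside $V_1$ is $\approx a_1/2=n/6$, which coincides with the \emph{typical} number of edges $\tfrac32 a_1^2/n=n/6$ that a random $3$-regular graph places inside a set of that size, so the first-moment penalty for this event is subexponential and the union bound over $\sim 4^{n}$ partitions does not close. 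Your second regime is also only sketched, and the inequality chain there ($|E^{\pm}|\leq(1+\epsilon)n+1$ combined with $|E^{\mp}|\geq(3+h_0)|V_{0,3}|/2$) is not derived.

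The paper avoids the union bound entirely, and this is the idea you are missing: after fixing the two a.a.s.\ properties of $G$ (isoperimetric constant $i(G)\geq 2^{-7}$ and $O(1)$ short cycles), the argument for an \emph{arbitrary} labelling is deterministic edge counting. Writing $t$ for the number of components of $G^{+}$ and $a_1$ for its isolated vertices, every component of size $\geq 2$ has at least $4$ incident negative edges (using expansion for large components and the scarcity of cycles for small ones), giving $2|N|\geq 4t-a_1-O(1)$ and hence $t\leq 3n/8+a_1/4+O(1)$; then the same expansion applied to the set of isolated vertices gives $(3+2^{-7})a_1/2\leq|N|\leq 3n/4$, i.e.\ $a_1\leq(1/2-2^{-10})n$, and the two bounds combine to $t\leq(1/2-2^{-12})n+o(n)$. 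Note that this second inequality is exactly the kind of isoperimetric edge count you gesture at in your second regime, but it works for \emph{all} values of $a_1$ up to $n/2$, so no regime split and no configuration-model computation is needed. As written, your proposal does not constitute a proof, and the route it proposes for the core probabilistic step is unlikely to succeed without being replaced by a deterministic counting argument of this kind.
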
 

For planar graphs, one can expect sublinear discrepancy of spanning trees; we managed to give asymptotically sharp bounds.

\begin{theorem}\label{planar} 
Let $G$ be a planar graph on $n$ vertices. Then there exists a real number $c>0$ such that $\D(G, \mathcal{T}_n) \leq c \sqrt{n}$.
\end{theorem}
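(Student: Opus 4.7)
The plan is to prove the bound by induction on $n$, using the Lipton--Tarjan planar separator theorem. First I would apply the separator theorem to $G$ to obtain a partition $V(G) = A \sqcup S \sqcup B$ with $|A|, |B| \leq 2n/3$, $|S| \leq c_0\sqrt n$, and no edges between $A$ and $B$. Write $G_1 := G[A \cup S]$ and $G_2 := G[B \cup S]$; both are planar on at most $2n/3 + c_0\sqrt n$ vertices. I would then invoke the inductive hypothesis on $G_1$ and $G_2$ to obtain labelings $f_1$ on $E(G_1)$ and $f_2$ on $E(G_2)$ of small spanning-tree discrepancy, and combine them into a labeling $f$ on $E(G) = E(G_1) \cup E(G_2)$ (on the overlap $E(G[S])$ I would simply keep the $f_1$ values).

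The structural observation driving the bound is that for any spanning tree $T$ of $G$, the restriction $T_1 := T \cap E(G_1)$ is a spanning forest of $G_1$ with at most $|S| = O(\sqrt n)$ components. Indeed, any component of $T_1$ contained entirely in $A$ would need to be linked to the rest of $T$ through an edge leaving $A$; since there are no $A$--$B$ edges in $G$, such a $T$-edge lands in $S$, therefore lies in $E(G_1)$ and hence in $T_1$, contradicting maximality. Thus $T_1$ extends to a spanning tree $\tilde T_1$ of $G_1$ by adding at most $|S| - 1 = O(\sqrt n)$ edges, and the same for $T_2 := T \cap (E(G_2) \setminus E(G[S]))$. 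This gives $|f(T)| \leq |f_1(\tilde T_1)| + |f_2(\tilde T_2)| + O(\sqrt n)$.

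The hard part will be closing the recursion. The naive bound $D(n) \leq 2D(2n/3 + O(\sqrt n)) + O(\sqrt n)$ only solves to $D(n) = O(n^{\log_{3/2} 2})$, which is far too weak. To reach $O(\sqrt n)$ one must force cancellation between $f_1(\tilde T_1)$ and $f_2(\tilde T_2)$. The route I would take is to strengthen the inductive hypothesis so that each labeling comes with control over an auxiliary global functional -- for instance $|f_i(E(G_i))|$ or a signed sum along the edges incident to $S$ -- and then choose the global sign of $f_2$ so that its contribution systematically cancels that of $f_1$. An alternative attack would be to replace Lipton--Tarjan by a simple cycle separator in a planar triangulation: the cyclic structure of the separator gives additional room to match up the orientations of $f_1$ and $f_2$ edge by edge along $S$, which should tighten the recursion constant enough to close to $O(\sqrt n)$.
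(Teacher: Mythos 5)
There is a genuine gap here: you correctly identify that the naive recursion $D(n)\le 2D(2n/3+O(\sqrt n))+O(\sqrt n)$ does not close, but neither of your proposed repairs works as stated, and the second half of your argument is only a plan. The specific obstruction to ``choosing the global sign of $f_2$ so that it cancels $f_1$'' is that the discrepancy is $\max_T|f(T)|$: as $T$ ranges over spanning trees of $G$, its restrictions to $G_1$ and $G_2$ sweep out essentially independent ranges of $f_1$- and $f_2$-values, so whichever global sign you give $f_2$, an adversarial tree can make the two contributions add constructively. Cancellation therefore cannot be engineered at the level of the labeling alone; it must be forced structurally on \emph{every} spanning tree.

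The missing idea is that no recursion is needed. The paper uses a \emph{balanced} separator --- a consequence of Lipton--Tarjan due to Djidjev \cite[Theorem~5]{D82}: there is a cut $C$ with $|C|=O(\sqrt n)$ and $n/2-c_1\sqrt n\le |A|,|B|\le n/2+c_2\sqrt n$ --- together with the single labeling $f\equiv +1$ on $E(A)\cup E(A,C)$ and $f\equiv -1$ on $E(B)\cup E(B,C)$. Since a spanning tree restricted to $A\cup C$ has at most $|C|$ components, it contains at least $|A|-1$ positive edges, and symmetrically at least $|B|-1$ negative edges; as the tree has $n-1$ edges in total, this pins $f(T)$ into an interval of length $O\bigl(|C|+\bigl||A|-|B|\bigr|\bigr)=O(\sqrt n)$ around $0$ (this is Lemma~\ref{cut}). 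Your structural observation that $T\cap E(G_1)$ has at most $|S|$ components is exactly the right mechanism, but it should be deployed once, against a balanced cut, rather than inside an induction.
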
 

The bounds, up to the constant factor are best possible.  Let $P_k^2:=P_k\stime P_k$ be the 
$k \times k$ grid.

 
\begin{theorem}\label{grid} 
$\D(P_k^2, \mathcal{T}_{n}) \geq ck$ for some $c>0$, where $n=k^2$. 
\end{theorem}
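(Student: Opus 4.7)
My plan is to argue by contradiction. Suppose $f:E(P_k^2)\to\{\pm 1\}$ is a labeling for which every spanning tree $T$ satisfies $|f(T)|<\varepsilon k$ for some sufficiently small absolute constant $\varepsilon>0$. I extract increasingly precise structural information about $f$ from three nested families of spanning trees, and then exhibit an explicit spanning tree $T^*$ whose $f$-value is forced to be of order $k^2$, contradicting the assumption.

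First I would use the column combs $T_j=(\text{all horizontal edges})\cup\{V(i,j):i\in[k-1]\}$ and the analogous row combs $T'_i$. Writing $H,V$ for the sums of all horizontal and all vertical labels and $H_i,V_j$ for the row and column sums, one has $f(T_j)=H+V_j$ and $f(T'_i)=V+H_i$. A short averaging from the $2k$ inequalities $|H+V_j|,\,|V+H_i|<\varepsilon k$ yields $|H|,|V|,|H_i|,|V_j|=O(\varepsilon k)$.

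Next, for every $\vec{j}\in[k]^{k-1}$ the graph $T_{\vec{j}}=(\text{all horizontal edges})\cup\{V(i,j_i):i\in[k-1]\}$ is a spanning tree with $f(T_{\vec{j}})=H+\sum_i V(i,j_i)$. Because the coordinates $j_i$ are chosen independently,
\[
\max_{\vec{j}} f(T_{\vec{j}})-\min_{\vec{j}} f(T_{\vec{j}}) \;=\; 2\cdot\bigl|\{i\in[k-1]:\{V(i,j)\}_{j\in[k]}\text{ contains both signs}\}\bigr|,
\]
so the number of ``mixed'' row fences is less than $\varepsilon k$. Letting $M_r^+,M_r^-$ denote the $+1$- and $-1$-monochromatic row fences and $M_r^c$ the mixed ones, the identity $V_j=|M_r^+|-|M_r^-|+\sum_{i\in M_r^c}V(i,j)$ combined with $|V_j|,|M_r^c|=O(\varepsilon k)$ yields the crucial balance $\bigl||M_r^+|-|M_r^-|\bigr|=O(\varepsilon k)$; in particular $|M_r^+|,|M_r^-|\geq k/2-O(\varepsilon k)$.

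Finally I would construct a spanning tree $T^*$ designed to harvest every $+1$ edge of the $M_r^+$ fences: include all $k$ vertical edges of each $i\in M_r^+$ and delete every horizontal edge of row $i+1$; include one vertical edge $V(i,j_i^*)$ of every other fence, choosing $j_i^*$ so that $V(i,j_i^*)=+1$ whenever possible; keep the horizontal edges of all remaining rows. A direct edge count gives $|T^*|=k^2-1$. The vertical contribution to $f(T^*)$ is $k|M_r^+|-|M_r^-|+|M_r^c|\geq k^2/2-O(\varepsilon k^2)$, while the horizontal contribution $\sum_{r\text{ kept}}H_r$ has absolute value at most $(k-|M_r^+|)\max_r|H_r|=O(\varepsilon k^2)$. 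For $\varepsilon$ small and $k$ large these combine to $|f(T^*)|\geq k^2/4\gg\varepsilon k$, the desired contradiction. The main obstacle is the spanning-tree verification for $T^*$ when several consecutive fences lie in $M_r^+$: then several rows have no horizontal edges, and one must check that each such stacked ladder of $+1$-mono fences is acyclic and hangs off a single horizontal spine (the topmost kept row). Once this is in hand the remaining estimates are routine.
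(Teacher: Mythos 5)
Your argument is correct, but it is a genuinely different route from the paper's. The paper classifies \emph{vertices} into all-positive, all-negative and mixed ($P$, $N$, $M$), uses a Hamiltonian path to force $|P|,|N|\leq k^2/2+k/8+2$, invokes Chv\'atalov\'a's isoperimetric inequality for the grid to conclude $|M|\geq k$, and then switches the pendant edges of $k/4$ mixed vertices (chosen in one parity class so they can all be made leaves of a common tree) to produce two trees whose $f$-values differ by at least $k/2$; this yields the explicit constant $c=1/4$. You instead work with \emph{edges}: the comb trees pin down all row and column sums to $O(\varepsilon k)$, the independent choice of one vertical edge per fence shows that all but $O(\varepsilon k)$ fences are monochromatic and that the two monochromatic classes are balanced, and an explicit harvesting tree then has $|f|=\Omega(k^2)$, a contradiction. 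Your spanning-tree verification for $T^*$ does go through: by maximality, the top row of each maximal run of consecutive $M_r^+$ fences keeps its horizontal path, the rows below it hang off that path as $k$ disjoint vertical paths, and the resulting blocks are chained by the single vertical edges of the non-$M_r^+$ fences, so the edge count $k^2-1$ together with connectivity gives a tree. What each approach buys: the paper's is shorter and gives a clean explicit constant, at the price of importing an isoperimetric inequality; yours is self-contained and elementary, exposes more structure of a hypothetical low-discrepancy labeling (near-monochromatic, nearly balanced fences), but yields only a small unspecified constant and needs the more delicate final construction.
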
 

If we drop the condition of spanning subgraph, then the discrepancies can be linear in the number of vertices.


\begin{proposition}\label{partial} 
Let $k, \ell$ be some positive integers. Then
$\D(P_k\stime P_\ell, \mathcal{P}) > k \ell/8 -\max\{k,\ell\}/8-\min\{k, \ell\}$.
\end{proposition}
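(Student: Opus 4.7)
My plan is to build paths via a ``snake'' construction on a chosen subset of columns (or rows), connected through the top and bottom horizontal edges. Assume $k\le\ell$ without loss of generality, so the required inequality becomes $\D(P_k\stime P_\ell,\mathcal{P})>k\ell/8-\ell/8-k$. For each column $i\in[k]$ let $v_i$ denote the sum of labels of the $\ell-1$ vertical edges in column $i$, and for each row $j\in[\ell]$ let $h_j$ denote the row-$j$ horizontal sum.

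Given a nonempty subset $S=\{i_1<i_2<\cdots<i_r\}\subseteq[k]$, consider the path $P(S)$ that traverses column $i_1$ vertically from $(i_1,1)$ to $(i_1,\ell)$, moves along row $\ell$ to $(i_2,\ell)$, traverses column $i_2$ downward to $(i_2,1)$, moves along row $1$ to $(i_3,1)$, and continues alternating between rows $\ell$ and $1$ for the connectors. This is a simple path: distinct selected columns are edge-disjoint, and the horizontal connectors in rows $1$ and $\ell$ occupy pairwise disjoint column intervals. The path uses every vertical edge of each selected column (contributing $\sum_{i\in S}v_i$) and at most $i_r-i_1\le k-1$ horizontal edges (each $\pm 1$); hence $|f(P(S))|\ge\bigl|\sum_{i\in S}v_i\bigr|-(k-1)$. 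Taking $S=\{i:v_i>0\}$ or $\{i:v_i<0\}$, whichever maximises the absolute vertical contribution, yields $|f(P(S))|\ge\tfrac12\sum_i|v_i|-(k-1)$. A symmetric ``row-snake'' construction with vertical connectors in columns $1$ and $k$ produces a path of discrepancy at least $\tfrac12\sum_j|h_j|-(\ell-1)$.

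The main remaining task is a lower bound on $\max\{\sum_i|v_i|,\sum_j|h_j|\}$. When the total edge sum $|F|$ is large this is immediate from $\sum_i|v_i|+\sum_j|h_j|\ge|F|$. The principal obstacle is ``balanced'' labelings (exemplified by the checkerboard) for which both $\sum_i|v_i|$ and $\sum_j|h_j|$ are only $O(k+\ell)$, rendering the snake paths too short. In that regime, my plan is to partition the grid into $\lfloor k/2\rfloor$ vertical strips of width~$2$ and, inside each strip, pick the zig-zag Hamilton path that, at every row-pair, uses whichever of the two available vertical edges is favourable. Writing $a_j,b_j$ for the labels of the two vertical edges in a strip at row pair $j$, each zig-zag captures at least $\tfrac12\sum_j|a_j-b_j|$ in discrepancy, which is $\Omega(\ell)$ in balanced labelings; concatenating the per-strip zig-zags via horizontal connectors in rows $1$ and $\ell$ then yields a path with discrepancy $\Omega(k\ell)$, and the additive error $-\ell/8-k$ is absorbed by the $\le k-1$ connector edges and the rounding loss from grouping columns into pairs. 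The hardest part of the argument is this last case analysis: reconciling the column-snake bound with the strip zig-zag bound so as to match the exact $1/8$ factor and error terms in the statement.
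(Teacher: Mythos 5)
Your column-/row-snake constructions are sound as far as they go, but the proof has a genuine gap exactly where you flag it: the ``balanced'' regime. The quantity you propose to harvest inside a width-$2$ strip, $\tfrac12\sum_j|a_j-b_j|$, can be identically zero in labelings where $\sum_i|v_i|$ and $\sum_j|h_j|$ are also only $O(k+\ell)$, yet the true discrepancy is $\Theta(k\ell)$. Concretely: label every vertical edge between rows $j$ and $j+1$ with $(-1)^{j+1}$ (so $a_j=b_j$ in every vertical strip and every column sum is $0$ or $1$), and alternate $+1,-1$ along each row of horizontal edges (so every row sum is $0$ or $\pm1$). All three of your statistics vanish up to $O(k+\ell)$, but the discrepancy is about $k\ell/4$, witnessed by paths living in the \emph{horizontal} $P_k\stime P_2$ strips where all rungs share a sign. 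There are two further problems with the zig-zag step: in a genuine Hamilton path of a $2\times\ell$ strip you cannot independently choose which of the two vertical edges to use at each row-pair (the choices are forced to alternate), and if you instead take a non-Hamiltonian path you must account for the connecting rail edges, whose total contribution can be as large as $\pm\ell$ and swamps the claimed gain. So the ``reconciliation'' you defer is not a technicality; as stated the two cases do not cover all labelings.

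The paper's proof needs no global case analysis. It cuts out $\lfloor k/2\rfloor$ disjoint $P_2\stime P_\ell$ strips and proves that \emph{every} such strip, under \emph{every} labeling, contains a path of discrepancy at least $\ell/2$. The mechanism is a four-path averaging identity: let $X$ (resp.\ $Y$) be the rungs labeled $+1$ (resp.\ $-1$), and take the two zig-zag paths through all of $X$ (starting from the upper and from the lower left corner) and the two through all of $Y$. The two $X$-paths together use each rail edge of the strip exactly once, and likewise the two $Y$-paths, so the rail contributions cancel when the four bounds are summed, leaving $4\D\geq 2|X|+2|Y|=2\ell$. This is the idea your sketch is missing: pair each path with its reflection so that the uncontrolled connector edges telescope against the total rail sum. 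The per-strip paths of the majority sign are then concatenated with at most $k-1$ extra edges, giving the stated bound. If you want to salvage your approach, you would need an analogous cancellation argument for your connectors, and you would also have to run your strip decomposition in both orientations.
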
 

We have the following corollary since paths are also trees.
\begin{corollary}
$\D(P_k\stime P_\ell, \mathcal{T}) > k \ell/8 -\max\{k,\ell\}/8-\min\{k, \ell\}$.
\end{corollary}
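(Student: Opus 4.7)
The plan is extremely short: the corollary is an immediate monotonicity consequence of Proposition~\ref{partial}, using only the observation that every path is a tree.

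More precisely, I would argue as follows. Recall the general definition $\D(X,E) = \min_f \max_{A \in E} |f(A)|$. If $E_1 \subseteq E_2$ are two hypergraphs on the same ground set, then for every labeling $f$ we have $\max_{A \in E_1} |f(A)| \le \max_{A \in E_2}|f(A)|$, and taking the minimum over $f$ yields $\D(X,E_1) \le \D(X,E_2)$. Applying this to $X = E(P_k \square P_\ell)$ with $E_1 = \mathcal{P}$ and $E_2 = \mathcal{T}$ (valid because every path is a tree), we conclude
\[
\D(P_k\square P_\ell, \mathcal{T}) \;\ge\; \D(P_k\square P_\ell, \mathcal{P}) \;>\; \frac{k\ell}{8} - \frac{\max\{k,\ell\}}{8} - \min\{k,\ell\},
\]
where the last inequality is exactly Proposition~\ref{partial}.

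There is no real obstacle here; the only thing worth double-checking is that the set-inclusion argument is being applied correctly, i.e.\ that enlarging the family of subgraphs can only increase the discrepancy (it cannot decrease it, since we are maximizing over the family before minimizing over the labeling). This monotonicity is a standard and elementary property of hypergraph discrepancy, so the corollary requires no further argument beyond citing Proposition~\ref{partial}.
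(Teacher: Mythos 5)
Your proposal is correct and is exactly the paper's argument: the paper derives this corollary from Proposition~\ref{partial} with the single remark that ``paths are also trees,'' which is precisely the monotonicity of discrepancy under enlarging the family $\mathcal{S}_G$ that you spell out. No further comment is needed.
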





Let us make some easy observations which nevertheless give motivations for the above theorems and 
to those proofs.
The graph $P_2 \stime P_k$ has exponentially many spanning trees, but still $\D(P_2 \stime P_k, \mathcal{T}_{2k}) \leq 3$. To see this, we partition the graph into a $2\times \lceil k/2\rceil$ grid and a $2\times \lfloor k/2\rfloor$ grid, and label the edges of the  first grid by $-1$, of the second grid by $+1$. We label the edge shared by two sub-grids arbitrarily. The situation for  $P_k \stime P_k$, the $k\times k$ grid, is similar: 
cut the grid into two halves and label $+ 1$  the upper, and   $-1$ the lower region. Since any spanning tree is cut at most 
$k$ times, $\D(P_k \stime P_k, \mathcal{T}_n) \leq k-1$. For not necessarily spanning trees, obviously, $\D(G, \mathcal{T}) \geq \lceil \Delta(G)/2 \rceil$.

\subsection*{Remark} Koml\'os, S\'ark\" ozy and Szemer\'edi \cite{KSS01} showed that for every $c>0$ and $\Delta$, there is $n_0$ such that if $G$ is a graph of order $n>n_0$ with $\delta(G)>(1/2+c)n$, and $T$ is a tree of order $n$ with maximum degree less than $\Delta$, then $G$ contains $T$ as a subgraph. By using the standard proof method of connected matchings, Theorems~\ref{thm:2.1} and \ref{thm:main} imply the following corollaries.

\begin{corollary}
Let $\Delta$ and  $c> 0$ be given. Then there exists a constant $n_0$ with the following properties. If $n>n_0$, $T$ is a tree of order $n$ with $\Delta(T)\leq \Delta$, and G is a graph of order $n$
with $\delta(G)> (3/4 + c)n$, then there is a subgraph of $G$ which is isomorphic to $T$ with discrepancy $\Theta(n)$.
\end{corollary}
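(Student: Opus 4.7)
The plan is to combine Theorem~\ref{thm:2.1} with a colored version of the Szemer\'edi Regularity Lemma and the bounded-degree spanning tree embedding technique of Koml\'os-S\'ark\"ozy-Szemer\'edi \cite{KSS01}. First apply the regularity lemma simultaneously to the $+1$-colored subgraph $G_+$ and the $-1$-colored subgraph $G_-$ with parameter $\varepsilon \ll c$, obtaining an equitable partition $V_0, V_1, \ldots, V_L$ of $V(G)$ with $|V_i|=m$ for $i \geq 1$, in which every pair $(V_i, V_j)$ is $\varepsilon$-regular in both color classes. Let $R$ be the reduced graph on $[L]$ with $ij \in E(R)$ whenever the total density in $(V_i, V_j)$ is at least some small constant $d \ll c$. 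A standard counting argument then yields $\delta(R) \geq (3/4 + c/2)L$.

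Label each edge $ij \in E(R)$ by $+1$ if the density of $+1$-edges in the pair $(V_i, V_j)$ is at least half of the total density, and by $-1$ otherwise. Apply Theorem~\ref{thm:2.1} to $R$ with this labeling to obtain a Hamilton cycle $C^R$ in $R$ of discrepancy at least $(c/2)L/32 = cL/64$. Without loss of generality, the majority label along $C^R$ is $+1$, so at least $L/2 + cL/128$ edges of $C^R$ carry label $+1$; for each such edge the $+1$-colored bipartite subgraph of the corresponding pair is $\varepsilon$-regular of density at least $d/2$, and symmetrically on $-1$-labeled edges of $C^R$.

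Now embed $T$ into $G$ by applying the KSS embedding machinery to the colored spanning subgraph $G^{\ast}$ of $G$ obtained by keeping, for each edge $ij$ of $C^R$, only the preferred-color edges of $(V_i, V_j)$. The reduced graph of $G^{\ast}$ contains $C^R$ with all pairs $\varepsilon$-regular of density $\geq d/2$, which is enough for the KSS tree-embedding lemma to embed $T$ so that each tree-edge is placed into one of the regular pairs corresponding to an edge of $C^R$ and each cluster $V_i$ receives $(1+o(1))m$ tree-vertices. By construction, every tree-edge of the embedding carries the preferred color of the pair it crosses, so the resulting copy of $T$ contains at least $(L/2 + cL/128)(1-o(1))m$ edges labelled $+1$ and at most $(L/2 - cL/128)(1+o(1))m$ labelled $-1$, giving discrepancy $\Omega(n) = \Theta(n)$.

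The main technical hurdle is this last step: KSS directly yields \emph{some} embedding of $T$ in $G$ (under the weaker hypothesis $\delta(G)\geq (1/2+c)n$), but here we must force each tree-edge onto a prescribed pair \emph{and} onto the prescribed majority color within that pair. Passing to the colored subgraph $G^{\ast}$ solves this, at the cost of the embedding lemma being invoked with density $d/2$ rather than the density of $G$; the hypothesis $\Delta(T) \leq \Delta$ is what keeps the bandwidth of $T$ compatible with distributing its vertices equitably around the scaffold cycle $C^R$, so the standard backbone-plus-absorption arguments of KSS go through with parameters adjusted by absolute constants.
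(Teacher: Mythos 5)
Your reduction to the cluster graph and the application of Theorem~\ref{thm:2.1} there match the paper's (sketched) strategy, but there is a genuine gap in your final embedding step. You require an embedding of the spanning tree $T$ in which \emph{every} tree edge crosses a pair $(V_i,V_j)$ corresponding to an edge of the Hamilton cycle $C^R$, while every cluster receives $(1+o(1))m$ tree vertices. Such an embedding induces a graph homomorphism from $T$ onto the cycle $C_L$ with essentially balanced fibres, and this does not exist for all bounded-degree trees: when $L$ is even (which you cannot exclude), any homomorphism into $C_L$ must send the two classes of the bipartition of $T$ to the even-indexed and the odd-indexed clusters respectively, so each parity class of clusters receives exactly one side of the bipartition. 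Taking for $T$ a path with $\Delta-2$ pendant leaves attached to every second vertex gives $\Delta(T)\le\Delta$ but bipartition classes of sizes roughly $n/\Delta$ and $n(1-1/\Delta)$, so the clusters cannot all receive $m$ vertices; since $T$ is spanning there is no slack to absorb the imbalance, and even a relaxed version (almost all tree edges on cycle pairs) fails for the same parity reason. Consequently your count of $+1$ versus $-1$ tree edges, which tacitly assumes the $n-1$ tree edges are spread essentially uniformly over the $L$ cycle pairs, is unjustified.

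The repair is the route the paper actually indicates: from the high-discrepancy Hamilton cycle in the cluster graph pass to one of its two alternating perfect matchings; one of them inherits at least half of the cycle's discrepancy, and, being contained in a Hamilton cycle of the cluster graph, it is automatically a \emph{connected} matching. The Koml\'os--S\'ark\"ozy--Szemer\'edi machinery then embeds $T$ so that all but $o(n)$ of its edges lie inside matching pairs, each pair receiving $(2+o(1))m$ tree edges placed in the majority colour of that pair; within a single pair there is no parity obstruction, because the small subtrees assigned to it can be flipped individually to balance the two clusters. The discrepancy of the embedded tree is then governed by the discrepancy of the matching, which is $\Omega(L)$, giving the required $\Omega(n)$ bound. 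With that substitution (Hamilton cycle $\to$ connected high-discrepancy perfect matching as the scaffold), the rest of your argument goes through.
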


\begin{corollary}
Let $\Delta$ and  $c,d,\nu> 0$ be given. Then there exists a constant $n_0$ with the following properties. If $n>n_0$, $T$ is a tree of order $n$ with $\Delta(T)\leq \Delta$, G is a graph of order $n$ with $\delta(G)> (1/2 + c)n$, and the number of $\nu$-balanced vertices is at least $(3/4+d)n$, then there is a subgraph of $G$ which is isomorphic to $T$ with discrepancy $\Theta(n)$.
\end{corollary}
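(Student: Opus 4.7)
The plan is a standard reduction from Theorem~\ref{thm:main} via Szemer\'edi's regularity lemma and the connected-matching/tree-embedding machinery of Koml\'os, S\'ark\"ozy and Szemer\'edi~\cite{KSS01}. I would first apply the regularity lemma with parameter $\varepsilon \ll \min\{c,d,\nu,1/\Delta\}$ to $G$, treating the $\pm 1$ labelling as a $2$-edge-colouring, to produce an equitable partition $V_0,V_1,\ldots,V_k$ in which all but at most $\varepsilon\binom{k}{2}$ pairs are $\varepsilon$-regular in both colour classes. Form a reduced graph $R$ on $[k]$ by joining $i\sim j$ when $(V_i,V_j)$ is regular in both colour classes and has total density at least $c/4$, and label the edge $ij$ by the majority colour of the pair. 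Standard slicing calculations give $\delta(R)\ge (1/2+c/2)k$.

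Next I would verify the balancedness hypothesis of Theorem~\ref{thm:main} for the labelled $R$. Call a cluster $V_i$ \emph{good} if at least $|V_i|/2$ of its vertices are $\nu$-balanced; since the set $U$ of $\nu$-balanced vertices of $G$ satisfies $|U|\ge(3/4+d)n$, a simple averaging shows at least $(3/4+d/2)k$ clusters are good. For a good cluster $V_i$, averaging the balancedness of the vertices in $U\cap V_i$ and discarding the $\varepsilon$-mass of $+1$-edges that fall into low-density or irregular pairs shows that $V_i$ has at least $\nu'k$ neighbours of each label in $R$, for some $\nu'=\Theta(\nu)$. Applying Theorem~\ref{thm:main} to the labelled $R$ with parameters $c/2,d/2,\nu'$ yields a Hamilton cycle $C$ in $R$ whose labelled discrepancy is at least $(\nu')^2 k/500=\Theta(k)$.

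Finally, I would use $C$ as a scaffold to embed $T$ into $G$ via the Koml\'os--S\'ark\"ozy--Szemer\'edi tree-embedding method. The cycle $C$ contains $\Theta(k)$ more edges of some sign, say $+1$, than of the other, so one can extract a long arc of $C$ whose edges are $(+1)$-dominated. Because $\Delta(T)\le \Delta$ is bounded, $T$ admits the standard decomposition into an $O(1/\varepsilon)$-vertex backbone path together with small bounded-degree subtrees hanging off it; embedding the backbone clusters along $C$ and greedily completing the embedding via the Key/Blow-up Lemma produces a copy of $T$ in which a linear number of edges are drawn from $(+1)$-majority regular pairs. Each such pair contributes a $(1-O(\varepsilon))$ fraction of $+1$-labelled edges, so after a minor local re-routing (to avoid the few $-1$-edges inside $(+1)$-pairs) the resulting copy of $T$ has $\Theta(n)$ more $+1$ than $-1$ edges, i.e.\ discrepancy $\Theta(n)$.

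The main obstacle I expect is the second step: transferring the local balancedness condition from $G$ to the labelled reduced graph $R$. One has to carefully bookkeep how $+1$-edges from a $\nu$-balanced vertex can only concentrate in regular pairs whose $+1$-density is bounded away from zero, and exclude the degenerate scenario where essentially all such edges lie in pairs whose majority colour is nevertheless $-1$. Choosing the density threshold well below $\nu$ and $\varepsilon\ll \nu$ makes this accounting work and preserves the balancedness parameter up to a constant factor, allowing Theorem~\ref{thm:main} to be invoked with parameters of the same order.
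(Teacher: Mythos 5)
Your proposal follows essentially the same route the paper sketches (and deliberately does not write out): apply the regularity lemma, transfer the minimum-degree and balancedness hypotheses to the labelled reduced graph, apply Theorem~\ref{thm:main} there to obtain a high-discrepancy Hamilton cycle --- equivalently a high-discrepancy connected perfect matching --- and then run the Koml\'os--S\'ark\"ozy--Szemer\'edi embedding of $T$ along that structure. One detail in your final step is stated incorrectly: a regular pair whose majority label is $+1$ need only have $+1$-density slightly above $1/2$, not $1-O(\varepsilon)$, so you should embed the tree edges assigned to such a pair inside its $+1$-colour subgraph (which is still regular with density bounded away from zero) rather than rely on ``re-routing around few $-1$-edges''; and since $T$ is spanning you must use the whole cycle/matching, letting the discrepancy come from the $\Theta(k)$ label imbalance summed over all matching edges, not from a single monochromatic arc.
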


The key part of the proof is, after applying the degree form of the regularity lemma, find a high discrepancy perfect matching in the cluster graph, which is automatically a connected matching. The proof is standard application of the method of Koml\'os, S\'ark\" ozy and Szemer\'edi \cite{KSS01}, and we omit further details.

\subsection*{Notation} We let $N^+(v)$ to denote the set of neighbors $u$ of $v$ such that $uv$ is labelled by $+1$. Similarly, $N^-(v)$ denotes the set of neighbors $u$ of $v$ such that $uv$ is labelled by $-1$. By definition $N(v)=N^+(v)\cup N^-(v)$. We let $d^+(v)=|N^+(v)|,$ $d^-(v)=|N^-(v)|$ and $d(v)=d^+(v)+d^-(v).$ Suppose $U\subseteq V(G)$, we define $N(U)=\{v\in V(G)\mid \exists u\in U, uv\in E(G)\}$. We say $u$ is a \emph{positive neighbor} of $v$ if $u\in N^+(v)$, and $u$ is a \emph{negative neighbor} of $v$ if $u\in N^-(v)$. Suppose $H$ is a subgraph of $G$, we define $f(H)$ to be 
the sum of labels of all the edges of $H$, where $f:E(G)\to\{1,-1\}$.

\subsection*{Structure of the paper} The paper is organized as follows. In Section 2, we discuss the discrepancy of Hamilton cycles. In Section \ref{sec:ran}, we prove Theorem \ref{r3reg} for random $3$-regular graphs. Section \ref{sec:plan} contains some results of discrepancies for grids and planar graphs.


\section{Discrepancy of Hamilton cycles}\label{sec:ham}

In this section, we study the discrepancy of Hamilton cycles. The first tool we use is the following generalization of Dirac's Theorem \cite{Posa}.
\begin{lemma}\label{lem:posa}
Let $G=(V,E)$ be a graph and let $c>0$ be a real number. Suppose $E^\prime\subseteq E$ such that $E^\prime$ induces a linear forest in $G$. If $\delta(G)\geq(\tfrac{1}{2}+c)n$ and $|E^\prime|\leq 2cn$, then there exists a Hamilton cycle in $G$ which contains all the edges in $E^\prime$.
\end{lemma}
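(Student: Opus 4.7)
The plan is to reduce Lemma~\ref{lem:posa} to a prescribed-edge version of Dirac's theorem by contracting each path of $E'$ to a single edge, verifying that the resulting graph still satisfies a Dirac-like degree condition with enough slack to absorb the forced edges.

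Let $P_1,\dots,P_k$ be the non-trivial path components of the linear forest induced by $E'$, with endpoints $a_i,b_i$ (pairwise distinct across all $i$, since $E'$ is a linear forest), and let $I$ denote the union of the internal vertex sets of these paths. Then $|I|=|E'|-k\leq 2cn$. Consider the (multi)graph $G^{*}$ on vertex set $V(G)\setminus I$ whose edges are those of $G[V(G)\setminus I]$ together with $k$ new edges $e_i=a_ib_i$ (added as parallel edges if $a_ib_i\in E(G)$ already). Set $F=\{e_1,\dots,e_k\}$; because the endpoints are distinct, $F$ is a matching in $G^{*}$. Any Hamilton cycle of $G^{*}$ that contains every edge of $F$ lifts, by replacing each $e_i$ with the path $P_i$, to a Hamilton cycle of $G$ containing $E'$.

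First I would check the degree bound. Set $n^{*}=n-|I|$. Every vertex of $G^{*}$ loses at most $|I|$ neighbors when passing from $G$ to $G^{*}$, so
$$\delta(G^{*})\ \geq\ \left(\tfrac12+c\right)n-|I|.$$
The inequality $(\tfrac12+c)n-|I|\geq(n^{*}+k)/2=(n-|I|+k)/2$ reduces to $2cn\geq|I|+k=|E'|$, which is precisely our hypothesis. Hence $\delta(G^{*})\geq(n^{*}+k)/2$.

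The substantive step is the following strengthening of Dirac, which I would invoke (or prove directly): if $H$ is a graph on $m$ vertices with $\delta(H)\geq(m+k)/2$ and $M\subseteq E(H)$ is a matching of size $k$, then $H$ admits a Hamilton cycle containing $M$. Applied to $H=G^{*}$ and $M=F$, this yields the required cycle. To prove the strengthening, I would run Pósa's rotation-extension starting from a longest path that contains $M$ as a disjoint union of sub-paths, restricting attention to rotations whose deleted edge lies outside $M$. The extra $k/2$ degree slack over Dirac's $m/2$ threshold exactly compensates for the (up to) $k$ forbidden rotation positions associated with $M$-edges, so the usual Pósa expansion estimate for the set of rotation-reachable endpoints still forces either an extension of the path or a closure into a cycle. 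This restricted rotation-extension is the main technical bottleneck and is essentially the content of the result attributed to \cite{Posa} in the lemma's statement; once it is in hand, the lift from $G^{*}$ back to $G$ completes the proof.
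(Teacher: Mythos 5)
Your proposal is correct in substance, but it is worth noting that the paper offers no proof of this lemma at all: it is quoted as a known generalization of Dirac's theorem from P\'osa's 1963 paper, whose statement already covers linear forests directly --- if $F\subseteq G$ is a linear forest with $k$ edges and $\delta(G)\geq (n+k)/2$, then $G$ has a Hamilton cycle through $F$. Since $(\tfrac{1}{2}+c)n=(n+2cn)/2\geq (n+|E'|)/2$, the lemma is an immediate instance of that theorem and no contraction is needed. Your reduction is nevertheless sound: the bookkeeping $\delta(G^{*})\geq(\tfrac{1}{2}+c)n-|I|\geq (n^{*}+k)/2$ precisely when $2cn\geq |I|+k=|E'|$ is exactly right, the endpoints of distinct path components are disjoint so $F$ really is a matching, and the lift back to $G$ is clean (you can avoid the multigraph entirely by keeping the existing edge $a_ib_i$ when it is already present and insisting that the cycle use it). The one place where you wave your hands is the matching version of Dirac itself: the claim that the extra $k/2$ of degree ``exactly compensates'' for the $k$ forbidden rotation positions needs an explicit count. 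The cleanest route is not rotation--extension but an Ore-type argument on an edge-maximal counterexample: such a graph has, between any two nonadjacent vertices, a Hamilton path $v_1\cdots v_m$ containing $M$; for every $i$ with $v_iv_{i+1}\notin M$ at most one of $v_1v_{i+1}$, $v_iv_m$ can be an edge (else rerouting yields a Hamilton cycle through $M$), so $d(v_1)+d(v_m)\leq (m-1)+k<m+k$, contradicting $\delta\geq(m+k)/2$. With that count supplied, your argument is a complete, self-contained proof of a statement the paper treats as a citation.
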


We will use the following simple lemma at various points:

\begin{lemma}\label{lem:path}
Let $\nu,\gamma>0$. Suppose $U\subseteq V(G)$ with $|U|\geq \nu n$ such that for every $u\in U$, we have $|N(u)|\geq \gamma n$. Then there exists a path $P$ of length at least $\nu\gamma n/2$, such that every edge in $P$ contains at least one vertex in $U$. Moreover, if for every $u\in U$ we have $|N(u)\setminus U|\geq \gamma n$, then there exists a path of length at least $\nu\gamma n$ whose vertices are alternating between $U$ and $N(U)\setminus U$.
\end{lemma}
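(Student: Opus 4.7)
The plan is to deduce both claims from a single observation: a graph on at most $n$ vertices with $m$ edges contains a path with at least $2m/n$ edges (Erdős--Gallai). For the first claim, I would let $H \subseteq G$ be the spanning subgraph whose edges are precisely the edges of $G$ that touch $U$; then every edge of any path in $H$ automatically meets $U$, so it suffices to find a long path in $H$. For the second claim, I would let $B$ be the bipartite subgraph of $G$ with parts $U$ and $V(G)\setminus U$ consisting of the crossing edges of $G$; any path in $B$ alternates between the two parts, and every non-$U$ vertex on such a path belongs to $N(U)\setminus U$, which is exactly the required alternation.

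Next, I would lower-bound the number of edges in each auxiliary graph using the degree hypothesis. A double-counting gives
\[
2|E(H)| \;\geq\; \sum_{u \in U} d_G(u) \;\geq\; |U|\,\gamma n \;\geq\; \nu\gamma n^{2},
\]
hence $|E(H)| \geq \nu\gamma n^{2}/2$. For the bipartite graph one counts directly from the $U$-side:
\[
|E(B)| \;=\; \sum_{u \in U} |N(u)\setminus U| \;\geq\; |U|\,\gamma n \;\geq\; \nu\gamma n^{2}.
\]

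Applying Erdős--Gallai with $N\le n$ then produces a path in $H$ of length at least $\nu\gamma n$, comfortably exceeding the target $\nu\gamma n/2$, and a path in $B$ of length at least $2\nu\gamma n$, exceeding the target $\nu\gamma n$. I do not expect any serious obstacle: the lemma is essentially a ``many edges $\Rightarrow$ long path'' exercise, and both the ``every edge meets $U$'' condition and the alternation condition are baked into the definitions of $H$ and $B$. The one point to watch is the factor $2$ appearing in the first degree-sum, which arises because edges with both endpoints in $U$ are counted twice; this is what forces the weaker $\nu\gamma n/2$ bound in the first part, while in the second (bipartite) part no such loss occurs. If one wished to avoid citing Erdős--Gallai, the same bounds follow from extending a maximal path in $H$ or $B$ greedily and observing that each endpoint of a longest path has all of its neighbors on the path.
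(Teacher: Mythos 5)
Your proof is correct and follows essentially the same route as the paper: form the auxiliary graph of edges touching $U$ (respectively, the bipartite graph of crossing edges), lower-bound its edge count by a degree sum, and extract a long path from the edge count. The paper's own proof is just a terser version of this (it even settles for the weaker $m/n$ path bound rather than the Erd\H{o}s--Gallai $2m/n$, which is why it only claims $\nu\gamma n/2$), so there is nothing to add.
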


\begin{proof}
Let $H$ be the collection of edges incident to the vertices in $U$. We have $e(H)\geq \nu\gamma n^2/2$. This implies $H$ contains a path $P$ of length at least $\nu\gamma n/2$. It is clear that $P$ satisfies all the requirements. The second part of the statement follows very similarly, considering edges only having exactly one endpoint in $U$.
\end{proof}

Let $G$ be an $n$-vertex simple graph with $\delta(G)\ge (3/4+c)n,$ where $c>0$ is a (possibly small) constant and the edges of $G$ are labelled by either $+1$ or $-1$. 

\begin{proof}[Proof of Theorem \ref{thm:2.1}]
Let $a=c/4$. The proof is split into two cases.
\medskip

\Case{1}{At least $(3/4+c)n$ vertices in $G$ are $a$-balanced.}

Suppose there exists a vertex $v$ such that less than $cn/2$ vertices in $N(v)$ have more than $cn$ negative neighbors in $N(v)$. Let $M\subseteq N(v)$ be the set of such vertices, hence, $|M|<cn/2$. Note that each vertex in $N(v)$ has at least $(1/2+2c)n$ neighbors inside $N(v)$, hence $G[N(v)\setminus M]$ contains a Hamilton cycle $\C$ with all edges being positive. Then we insert those vertices not in $N(v)\setminus M$ one by one to $\C$, so we obtain a Hamilton cycle with discrepancy at least
\[
|N(v)\setminus M|-3|V\setminus \big(N(v)\setminus M\big)|\geq 2cn.
\]

Now suppose that such vertex does not exist. 
Let $S\subseteq V(G)$ be the set of balanced vertices which have more positive neighbors. We may assume 
$|S|\geq(3/4+c)n/2$. Then for every $v\in S$, each vertex in $N^+(v)$ has at least $(1/8+3c/2)n$ neighbors in $N^+(v)$. Every vertex of $S$ has at least $a$ negative neighbors, hence, using Lemma~\ref{lem:path} we get that there exists 
a negative path $P$, such that each edge of $P$ contains at least one vertex in $S$, and both of the end vertices of $P$ are in $S$. Moreover, the length of $P$ is at least $\tfrac{an}{2}(\tfrac{3}{4}+c).$ We denote the end vertices of $P$ by $x,y$.

Next for each vertex $v$ in $V(P)\cap S$, we pick an edge in $N^+(v)$. For each vertex in $P$ but not in $S$, we pick a negative edge from its neighborhood. We also pick an edge $ab$ such that $a\sim x$ and $b\sim y$. We require that all the edges we picked are disjoint from $P$ and they form a linear forest in $G$. This is doable, since in each step we forbid edges incident to the vertices in $V'\subseteq V(G)$ with $|V'|<cn/2$.

Let $G'$ be the graph after we delete $P$ from $G$. By Lemma~\ref{lem:posa}, there is a Hamilton cycle $\C$ in $G'$ containing all the edges we picked.
First, we insert the entire path $P$ by removing the $ab$ edge and adding edges $ax$ and $by$. We obtain a Hamilton cycle $\C_1$, such that
\[
f(\C_1)\leq f(\C)-|P|+3\leq f(\C)-\frac{an}{2}\Big(\frac{3}{4}+c\Big)+3.
\]  
If $f(\C)\le 3cn/64,$ then the above implies that $f(\C_1)\le -3cn/64.$ If $f(\C)>3cn/64,$ then we can 
insert the vertices in $P$ one by one to obtain $\C_2$, such that we have
\[
f(\C_2)\geq f(\C)+\frac{|P|}{2}-\frac{|P|}{2}=f(\C).
\]
Therefore, $G$ contains a Hamilton cycle with discrepancy at least $3cn/64$.
\medskip

\Case{2}{There are at least $2cn$ vertices in $G$ which are not $a$-balanced.}

Suppose there exists a set $T$ containing $cn$ unbalanced vertices, each having at most $an$ negative neighbors. Let $\C$ be a Hamilton cycle in $G$. The difference between the number of positive edges and negative edges of $\C$ is at most $cn$, otherwise we are done. Then for every vertex $v\in T$, $N^+(v)$ contains at least $(2c-2a-\tfrac{c}{2})n$ positive edges of $\C$ and at least $(2c-2a-\tfrac{c}{2})n$ negative edges of $\C$. For each vertex $v$ in $T$, we pick a positive edge and a negative edge in $N^+(v)\cap \C$. 

Now we define $G':=G-T$. By Lemma~\ref{lem:posa}, there is a Hamilton cycle $\C$ in $G'$ that contains all the edges we picked.
We can either remove all the negative edges we picked in $\C'$ to insert the vertices in $T$, or remove all the positive edges we picked. Clearly, $G$ contains a Hamilton cycle with discrepancy at least $cn$.
\end{proof} 


Now we need some preparation to prove Theorem \ref{thm:main}. Let $\T$ be the set of triangles in $G$. We define a function $$g: V(G)\times \T\to\{-3,-1,0,1,3\},$$ such that for every $v\in V(G)$ and triangle $T\in\T$, we let $g(v,T)=0$ if $v$ is not a vertex in $T$. For $T=uvw$, we let $g(v,T)$ be the change in the discrepancy if the edge $uw$ is changed to the path $uvw$. To be more precise, we let $g(v,T)$ be $-1,1,-3,3$ if the triangle $T$ has type red, blue, dark red, dark blue, respectively, see Figure \ref{fig:1}.

\begin{figure}[h]
\centering
\begin{minipage}[t]{0.45\textwidth}
\centering
\includegraphics[width=2.2in]{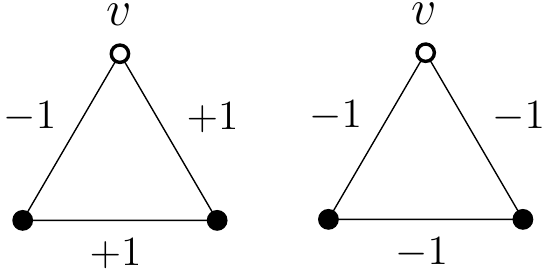}
\captionsetup{labelformat=empty}
\caption*{Type red.}
\end{minipage}\hfill\begin{minipage}[t]{0.5\textwidth}
\centering
\includegraphics[width=2.2in]{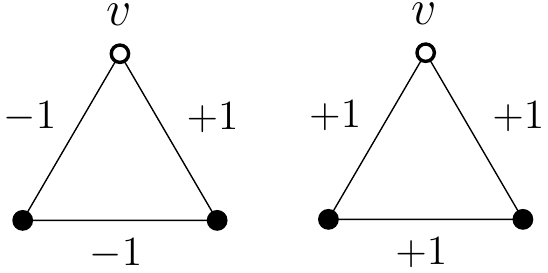}
\captionsetup{labelformat=empty}
\caption*{Type blue.}
\end{minipage}
\end{figure}
\begin{figure}[h]
\centering
\begin{minipage}[t]{0.5\textwidth}
\centering
\includegraphics[width=1.0in]{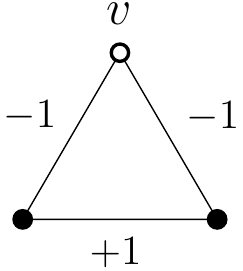}
\captionsetup{labelformat=empty}
\caption*{Type dark red.}
\end{minipage}\hfill\begin{minipage}[t]{0.5\textwidth}
\centering
\includegraphics[width=1.0in]{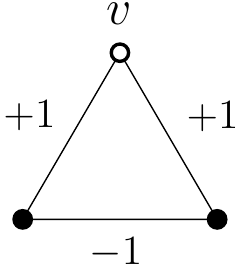}
\captionsetup{labelformat=empty}
\caption*{Type dark blue.}
\end{minipage}
\caption{The vertex coloring of $G$.}\label{fig:1}
\end{figure}

We color the vertex $v$ \emph{red} if there exist at least $\nu n^2$ triangles $T$ in $\T$ such that $g(v,T)=-1$, it is \emph{blue, dark red, dark blue} if there exist at least $\nu n^2$ triangles $T$ in $\T$ such that $g(v,T)=1,-3,3$, respectively. Note that when $c>8\nu$, every vertex is colored, since the neighborhood of every vertex spans at least $cn^2/2$ edges. Some vertices may have multiple colors under this definition, but we may assume most of them have only one color, using the
following lemma.
\begin{lemma}\label{lem:mono}
Suppose more than $\nu n/3$ vertices have more than one colors. Then there exists a Hamilton cycle of discrepancy at least $\nu n/3$.
\end{lemma}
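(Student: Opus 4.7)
The plan is constructive: we pick a subset $M^* \subseteq M$ of size $k \approx \nu n/3$, build a Hamilton cycle $\C^*$ on $V(G)\setminus M^*$ carrying two prescribed ``insertion slots'' per vertex of $M^*$, and then insert each $v \in M^*$ into one of its two slots. Because $v$ is multi-colored, the two slots can be chosen so that inserting $v$ into them contributes two distinct $g$-values from $\{-3,-1,+1,+3\}$, necessarily differing by at least $2$. That single bit of freedom per $v$ aggregates into the claimed linear discrepancy.

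To implement this, I would set $k = \lfloor \nu n/3 \rfloor$ and fix $M^* \subseteq M$ with $|M^*| = k$. For each $v \in M^*$, since $v$ has at least two colors, there exist $g^+_v > g^-_v$ in $\{-3,-1,+1,+3\}$ (hence $g^+_v - g^-_v \geq 2$) such that $v$ is the apex of at least $\nu n^2$ triangles of each of those two $g$-values. Greedily select triangles $T^+_v = u^+_v v w^+_v$ and $T^-_v = u^-_v v w^-_v$ realizing these $g$-values, with all endpoints $u^{\pm}_v, w^{\pm}_v$ in $V\setminus M^*$ and the collection of $2k$ base edges $\{u^{\pm}_v w^{\pm}_v : v \in M^*\}$ forming a linear forest in $G[V\setminus M^*]$; this is feasible because each $v$ still has $\geq \nu n^2$ suitable triangles while the constraints (avoid $M^*$ and previously-used endpoints) rule out only $O(kn)$ of them. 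Next, apply Lemma~\ref{lem:posa} to $G[V\setminus M^*]$: the minimum degree is at least $(1/2+c)n - k \geq (1/2 + c/2)(n-k)$, the prescribed linear forest has $2k \leq cn/2$ edges, and both hypotheses of the lemma are met, so there is a Hamilton cycle $\C^*$ of $G[V\setminus M^*]$ through all $2k$ prescribed base edges.

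For each sign pattern $s \in \{+,-\}^{M^*}$, form the Hamilton cycle $\C_s$ of $G$ by replacing, for every $v \in M^*$, the prescribed edge $u^{s_v}_v w^{s_v}_v$ of $\C^*$ with the path $u^{s_v}_v \, v \, w^{s_v}_v$, while leaving the other base edge $u^{-s_v}_v w^{-s_v}_v$ intact in $\C_s$. Since the only change around $v$ is the swap $u^{s_v}_v w^{s_v}_v \mapsto u^{s_v}_v v w^{s_v}_v$, we have
\[
f(\C_s) \;=\; f(\C^*) \;+\; \sum_{v \in M^*} g^{s_v}_v.
\]
The first summand is independent of $s$, while $\sum_v g^{s_v}_v$ sweeps out, as $s$ varies, a set of values whose extremes differ by $\sum_v (g^+_v - g^-_v) \geq 2k$. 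Hence some choice of $s$ yields $|f(\C_s)| \geq k \geq \nu n/3$, which is the desired bound.

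The main obstacle is the greedy triangle selection in the second step: each $v$ has $\nu n^2$ triangles of each color, but once $k = \Theta(\nu n)$ the set of forbidden vertices has size $\Theta(\nu n)$, killing $\Theta(\nu n^2)$ triangles, so the counting is tight. The assumption $c \geq 8\nu$ from Theorem~\ref{thm:main} provides precisely the slack needed so that the greedy procedure succeeds, Lemma~\ref{lem:posa} applies on $V\setminus M^*$, and $|M^*|$ may simultaneously be taken as large as $\nu n/3$.
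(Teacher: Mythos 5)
Your proposal is correct and follows essentially the same route as the paper: for each multi-colored vertex pick two insertion triangles with distinct $g$-values whose base edges form a linear forest, build a Hamilton cycle on the remaining vertices through all base edges via Lemma~\ref{lem:posa}, and exploit the per-vertex gap of at least $2$ so that the extremes over all insertion choices force discrepancy at least $\lfloor \nu n/3\rfloor$. The only differences are cosmetic (you sweep over all sign patterns at once where the paper inserts vertices one by one), plus the same negligible rounding slack in $k\ge\nu n/3$ that the paper also ignores.
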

\begin{proof}
Let $M\subseteq V(G)$ be the set of vertices having more than one colors, and $x_1,x_2,\dots,\allowbreak x_{\nu n/3}\in M$. For every $x_i$ ($1\leq i\leq \nu n/3$), we pick edges $a_ib_i$ and $c_id_i$ in the neighborhood of $x_i$, such that $g(x_i,x_ia_ib_i)\neq g(x_i,x_ic_id_i)$. We further require that, all the edges we picked do not contain $x_i$, and they form a linear forest in $G$. We can do this, since in each step we forbid less than $\nu n^2$ edges, but we have at least $\nu n^2$ triangles by the definition.

Now we remove $x_1,x_2,\dots,x_{\nu n/3}$ from $G$, and call the resulted graph $G'$. By Lemma~\ref{lem:posa}, we can find a Hamilton cycle $\C$ in $G'$ containing all the edges we picked. In order to insert $x_i$ back to $\C$, we can remove either $a_ib_i$ or $c_id_i$, and in each step, the discrepancies differ by at least $2$, since $|g(x_i, x_ia_ib_i)-g(x_i, x_ic_id_i)|\ge 2.$ Therefore, there exists a Hamilton cycle in $G$ with discrepancy at least $\nu n/3$.
\end{proof}


The following Lemma is our main tool in the proof.
\begin{lemma}\label{lem:tool}
Let $c,\nu>0$ with $c>8\nu$. Let $G$ be a graph with $\delta(G)\geq (1/2+c)n$. Let $R,Q\subseteq \{\text{red}, \text{blue}, \text{dark red}, \text{dark blue}\}$.  
Suppose, there is a path $P$ of length $\phi(\nu)n$ for some function $\phi$ and all edges of it have labels in $I\subseteq\{+1,-1\}$, where $0<\phi(\nu)<\nu/2$, and each edge of $P$ contains at least one vertex with colors in $R$, and the other vertices on $P$ have colors in $Q$. Assume that one of the following holds:\medskip\\
\emph{(i)} $\ \,I=\{-1\}$, $R=\{ \text{dark blue}\}$.\\
\emph{(ii)} $\: I=\{-1\}$, $R=\{ \text{blue}\}$, dark red $\notin Q$.\\
\emph{(iii)} $R=Q=\{ \text{dark blue}\}$.\\
\emph{(iv)} $R=\{ \text{dark blue}\}$, $Q=\{\text{blue}\}$.\\
\emph{(v)} $\;I=\{+1\}$, $R=Q=\{ \text{red}\}$.\medskip\\
Then if one of \emph{(i)}, \emph{(ii)}, \emph{(v)} holds, $G$ contains a Hamilton cycle with discrepancy at least $\phi(\nu)n/2-3/2$. If one of  \emph{(iii)}, \emph{(iv)} holds, $G$ contains a Hamilton cycle with discrepancy at least $\phi(\nu)n/4-3/4$. 
\end{lemma}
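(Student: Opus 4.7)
The plan is to exhibit two Hamilton cycles $\mathcal{C}_1,\mathcal{C}_2$ of $G$ whose discrepancies differ by a quantity linear in $\phi(\nu)n$, so at least one of them attains the claimed bound. Both will share a common Hamilton cycle $\mathcal{C}$ of $G-V(P)$ produced by Lemma \ref{lem:posa}, and they will differ only in how the vertices of $V(P)$ are incorporated: $\mathcal{C}_1$ inserts $P$ as a single block using a reserved anchor edge, while $\mathcal{C}_2$ inserts the vertices of $V(P)$ one at a time via reserved ``swap triangles'' whose discrepancy effects are controlled by the vertex colors.

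First I would greedily reserve, for each $v\in V(P)$, a triangle $T_v=vx_vy_v$ with $x_v,y_v\notin V(P)$, pairwise vertex-disjoint across the $T_v$'s, and whose value $g(v,T_v)$ equals the prescribed $R$-color value if $v$ lies in an $R$-cover $A\subseteq V(P)$ of the edges of $P$ (so $|A|\ge \lceil\phi(\nu)n/2\rceil$), and equals the prescribed $Q$-color value otherwise. Since every $R$- or $Q$-coloured vertex has at least $\nu n^2$ triangles of the matching type and at each greedy step only $O(\phi(\nu)n)$ vertices are forbidden, the reservations succeed. Additionally I reserve an anchor edge $ab\in E(G-V(P))$ with $ap_0,\,p_Lb\in E(G)$, where $p_0,p_L$ are the endpoints of $P$; such $a,b$ exist because $\delta(G)\ge (1/2+c)n$ is much larger than the number of already-used vertices. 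The union of these reserved edges is a linear forest of size $\phi(\nu)n+O(1)\le 2c'n$ in $G-V(P)$, whose minimum degree remains at least $(1/2+c/2)n$ for large $n$, so Lemma \ref{lem:posa} yields a Hamilton cycle $\mathcal{C}$ of $G-V(P)$ containing every reserved edge.

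Now define
\[
\mathcal{C}_1\;=\;(\mathcal{C}\setminus\{ab\})\cup\{ap_0\}\cup E(P)\cup\{p_Lb\},\qquad
\mathcal{C}_2\;=\;\Bigl(\mathcal{C}\setminus\{x_vy_v\}_{v\in V(P)}\Bigr)\cup\bigcup_{v\in V(P)}\{x_vv,\,vy_v\}.
\]
Both are Hamilton cycles of $G$, with $f(\mathcal{C}_1)=f(\mathcal{C})+f(P)+\delta_1$, $|\delta_1|\le 3$, and $f(\mathcal{C}_2)=f(\mathcal{C})+\sum_{v\in V(P)}g(v,T_v)$. Subtracting,
\[
f(\mathcal{C}_2)-f(\mathcal{C}_1)\;=\;\sum_{v\in V(P)}g(v,T_v)\;-\;f(P)\;-\;\delta_1.
\]
A short case analysis controls this difference, using $|A|\ge\lceil\phi(\nu)n/2\rceil$ and the color-determined values of $g(v,T_v)$: in (i), $f(P)=-\phi(\nu)n$ and the worst-case contribution $-3$ from non-$R$ vertices still gives $\sum g_v\ge -3$; in (ii), the exclusion of dark red improves this to $\sum g_v\ge -1$ while $f(P)=-\phi(\nu)n$; in (v), $f(P)=+\phi(\nu)n$ is balanced against $\sum g_v=-(\phi(\nu)n+1)$; in (iii) and (iv) all vertices are dark blue or blue, so $\sum g_v$ is large positive while $|f(P)|\le\phi(\nu)n$ is merely bounded. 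In every case the gap $|f(\mathcal{C}_2)-f(\mathcal{C}_1)|$ is at least $\phi(\nu)n-O(1)$ in cases (i), (ii), (v), giving $\max_i|f(\mathcal{C}_i)|\ge \phi(\nu)n/2-3/2$, and at least $\phi(\nu)n/2-O(1)$ in cases (iii), (iv), giving the weaker bound $\phi(\nu)n/4-3/4$.

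The main obstacle is the reservation step: one must produce $\phi(\nu)n+1$ pairwise-disjoint triangles with prescribed $g$-values, all disjoint from $V(P)$ and from the anchor, such that the whole system remains an admissible linear forest for Lemma \ref{lem:posa}. This is where the hypotheses are used in full: $c>8\nu$ provides the $\nu n^2$ candidate triangles at every vertex, while $\phi(\nu)<\nu/2$ keeps $|V(P)|$ and the total reserved size within the $2c'n$ budget of Pósa's extension lemma. The rest is careful sign tracking across the five cases.
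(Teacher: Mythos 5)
Your proposal follows essentially the same route as the paper's proof: reserve for each vertex of $P$ a triangle with a colour-prescribed $g$-value together with an anchor edge $ab$, extend this reserved linear forest to a Hamilton cycle of $G-V(P)$ via Lemma~\ref{lem:posa}, and then compare the block insertion of $P$ against the one-vertex-at-a-time insertion, with the same sign bookkeeping over the five cases. The one quibble is that insisting the reserved triangles be pairwise vertex-disjoint forbids up to roughly $3\phi(\nu)n\cdot n$ candidate triangles at a greedy step, which can exceed the guaranteed $\nu n^2$ when $\phi(\nu)$ is close to $\nu/2$; the paper sidesteps this by only requiring the reserved edges to form a linear forest (each outside vertex reused at most twice), which keeps the forbidden count below $\nu n^2$.
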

\begin{proof}
Let $X$ be the set of vertices on $P$ with colors in $R$, and let $Y$ be the set of vertices with colors in $Q$. Suppose $x,y$ are the first and the last vertices in $P$. 

Let us focus on (i) first. For every vertex $v\in X$, we pick an edge $a_vb_v$ inside the neighborhood of $v$, such that $a_v,b_v\notin V(P)$, and $g(v,va_vb_v)=3$. We require that the edges we picked form a linear forest. This is possible, and we can pick the edges one by one. For each step, the edge we chose cannot contain a vertex which already used twice in the previously chosen edges, and two end vertices of the new edge cannot both already used. Clearly, the number of edges that cannot be chosen is strictly less than $\nu n^2$, but we have $\nu n^2$ options, by the definition of the dark blue vertices.

For every vertex $u$ in $Y$, we pick an edge $a_ub_u$ in $N(u)$, and we pick the edge $ab$ such that $a\sim x$ and $b\sim y$, so for the endpoints $x$ and $y$ we pick two edges. Together with the edges we picked for the vertices in $X$, we further require that all the edges we picked are disjoint from $P$ and they form a linear forest in $G$. Note that the number of edges we picked is less than $cn$.

Let $G'$ be the graph after removing all the vertices in $P$ from $G$, we have $\delta(G')\geq (1/2+c/2)n$. Now we apply Lemma \ref{lem:posa}, and suppose $\C$ is a Hamilton cycle in $G'$ containing all the edges we picked. We have two different ways to construct a Hamilton cycle in $G$.

We remove the edge $ab$ and add $ax,by$ to insert the entire path $P$, we denote the resulted Hamilton cycle by $\C_1$. Clearly, 
\[
f(\C_1)\leq f(\C)-|P|+3=f(\C)-\phi(\nu) n+3.
\]

We can also insert the vertices in $P$ one by one. That is, for every $v\in V(P)$, we remove the edge $a_vb_v$ in $\C$ and add the edges $va_v, vb_v$. We then obtain a Hamilton cycle $\C_2$, and we have
\[
f(\C_2)\geq f(\C)+3|X|-3|Y|=f(\C),
\]
since the worst case is when all the vertices in $Y$ are dark red. Therefore, we obtain a Hamilton cycle in $G$ with discrepancy at least $\tfrac{1}{2}(\phi(\nu)n-3)$.
 
Now we consider (ii). The ideas are similar: For every vertex $v$ in $X$, we pick an edge $a_vb_v$ in $N(v)$ such that the $g(v,va_vb_v)=1$. For every vertex $u$ in $Y$, we pick an edge $a_ub_u$ in $N(u)$ such that $g(u,ua_ub_u)\neq -3$. We also pick $ab$ adjacent to the end vertices of $P$, and we require all the edges we picked are disjoint from $P$, and they form a linear forest.

We now remove all the vertices in $P$ from $G$. Let $\C$ be the Hamilton cycle in the resulted graph which contains all the edges we picked.
We can either insert the entire path to $\C$, or insert the vertices one by one. In the second situation, the worst case is when all the vertices in $Y$ are red. This gives us a Hamilton cycle with discrepancy at least $\tfrac{1}{2}(\phi(\nu)n-3)$.

Note that (ii) implies (v), since we can map $-1$ to $+1$, blue to red, and dark blue to dark red. For cases (iii) and (iv), for vertices in $X$, we pick edges as we did in (i). For the vertex $u$ in $Y$, we pick $a_ub_u$ in $N(u)$ such that $g(u, ua_ub_u)$ is $3$ and $1$, respectively. Again we have two ways to obtain the Hamilton cycle in $G$, insert the entire path, or insert the vertices one by one. Note that the worst case is when all the edges in $P$ are labelled by $1$. But since we have at least half of the vertices in $P$ dark blue, the difference of the discrepancies between these two constructions is still large, and we obtain a Hamilton cycle with discrepancy at least $\phi(\nu)n/4-3/4$. We omit further details.
\end{proof}

\noindent{\bf Remark.} Note that if we reverse the colors and the labels simultaneously, the same conclusions in Lemma~\ref{lem:tool} still hold.
\medskip


With all tools in hand, we are going to prove Theorem \ref{thm:main}.
\begin{proof}[Proof of Theorem \ref{thm:main}]
Let $M\subseteq V(G)$ be the set of vertices having more than $1$ colors. By Lemma \ref{lem:mono}, we have $|M|<\nu n/3$. Let $A,B,C,D\subseteq V(G)\setminus M$ be the set of blue, red, dark blue and dark red vertices, respectively. By Lemmas \ref{lem:path} and \ref{lem:tool}, we may assume the following properties of $G$.
\medskip

(i) At most $\nu n/30$ vertices in $C$ ($D$) have more than $\nu n/4$ neighbors in $C$ ($D$).
Otherwise by Lemma \ref{lem:path} we can find a path $P$ of length $\nu^2 n/120$ either inside $C$, or inside $D$. In both cases, condition (iii) in Lemma \ref{lem:tool} gives us a Hamilton cycle of discrepancy at least $\nu^2 n/480-3/4$. 
\smallskip

(ii) At most $\nu n/30$ vertices in $C$ ($D$) have more than $\nu n/4$ neighbors in $A$ ($B$). If not, there is a path of length $\nu^2 n/120$ whose vertices alternate between $C$ and $A$ (between $D$ and $B$), and condition (iv) in Lemma \ref{lem:tool} gives us a Hamilton cycle of discrepancy at least $\nu^2 n/480-3/4$.
\smallskip

(iii) At most $\nu n/3$ vertices in $A$ ($B$) have more than $\nu n/6$ negative (positive) neighbors inside $A$ ($B$). By the same reason as above, otherwise condition (v) in Lemma~\ref{lem:tool} gives us a Hamilton cycle of discrepancy at least $\nu^2 n/36-3/2$.
\smallskip

(iv) At most $\nu n/30$ vertices in $C$ ($D$) have more than $\nu n/4$ neighbors in $D$ ($C$). If, say, at least $\nu n/30$ vertices in $C$ have more than $\nu n/4$ neighbors in $D$, then suppose $\nu n/60$ of them have more positive neighbors in $D$. By Lemma \ref{lem:path}, there is a positive path $P$ of length $\nu^2 n/240$ whose vertices alternate between $C$ and $D$. We now apply condition (i) in Lemma \ref{lem:tool}, but in the form that $I=\{+1\}$ and $R=\{\text{dark red}\}$. Thus there exists a Hamilton cycle with discrepancy at least $\nu^2n/480-3/2$.
\smallskip

(v)  At most $\nu n/3$ vertices in $A$ ($B$) have more than $\nu n/6$ neighbors in $B$ ($A$). By the same reason as above, if we have more than $\nu n/3$ vertices in $A$ having more than $\nu n/6$ neighbors in $B$, we may suppose that $\nu n/6$ of them have more positive neighbors in $B$. Thus by Lemma~\ref{lem:path} there is a path of length $\nu^2 n/72$ whose vertices alternate between $A$ and $B$.
  Then we apply condition (ii) in Lemma \ref{lem:tool} with $I=\{+1\}$ and $R=\{\text{red}\}$, there exists a Hamilton cycle with discrepancy at least $\nu^2n/144-3/2$.
\smallskip

(vi) At most $\nu n/30$ vertices in $C$ ($D$) have more than $\nu n/4$ negative (positive) neighbors in $B$ ($A$). Otherwise condition (i) in Lemma \ref{lem:tool} gives a Hamilton cycle of discrepancy at least $\nu^2 n/240-3/2$.
\smallskip

(vii) At most $\nu n/3$ vertices in $A$ ($B$) have more than $\nu n/6$ neighbors in $C$ ($D$). If not, the condition (iv) in Lemma \ref{lem:tool} gives a Hamilton cycle of discrepancy at least $\nu^2 n/72-3/4$.
\smallskip

(viii) At most $\nu n/3$ vertices in $A$ ($B$) have more than $\nu n/6$ positive (negative) neighbors in $D$ ($C$). If not, the condition (i) in Lemma \ref{lem:tool} gives a Hamilton cycle of discrepancy at least $\nu^2 n/36-3/2$.
\medskip

Now the approximate structure of $G$ is as follows. The graph induced on $C\cup D$ is almost empty, and $G[A,C]$, $G[B,D]$, $G[A,B]$ are almost empty. Almost all the edges between $A$ and $D$ are negative, and almost all the edges between $B$ and $C$ are positive. Almost all the edges inside $A$ are positive, and almost all the edges inside $B$ are negative. 

We say a vertex in a set is \emph{typical} if it behaves as almost all the vertices in this set, otherwise it is \emph{untypical}. More precisely, a vertex $v\in A$ ($B$) is typical if it has less than $\nu n/6$ negative (positive) neighbors in $A$ ($B$), less than $\nu n/6$ neighbors in $B$ ($A$), less than $\nu n/6$ neighbors in $C$ ($D$), and less than $\nu n/6$ positive neighbors in $D$ ($C$). A vertex $v\in C$ ($D$) is typical if it has less than $\nu n/4$ neighbors in $C$ ($D$), less than $\nu n/4$ neighbors in $A$ ($B$), less than $\nu n/4$ neighbors in $D$ ($C$), and less than $\nu n/4$ negative (positive) neighbors in $B$ ($A$).

The rest of the proof is based on analyzing the number of dark vertices.

\Case{1}{There exist at most $\nu n/6$ dark blue vertices and at most $\nu n/6$ dark red vertices.}

In this case, we have $|A\cup B|\geq (1-\tfrac{\nu}{3})n$. Suppose $|A|\geq (\tfrac{1}{2}-\tfrac{\nu}{6})n$, and let $A'\subseteq A$ be the set of $\nu$-balanced vertices. Clearly, $|A'|\geq (\tfrac{1}{4}+d-\tfrac{\nu}{6})n$, and  each vertex $v\in A'$ has at least $\nu n$ negative neighbors. By (iii), (v) and (vii), all but $\nu n$ vertices in $A'$ have less than $\nu n/6$ negative neighbors inside $A$, in $B$ and in $C$. Since $|D|\leq \nu n/6$ and $|M|<\nu n/3$, we get a contradiction.

\Case{2}{There are at least $\nu n/6$ dark blue vertices or $\nu n/6$ dark red vertices.}

Suppose $|C|\geq \nu n/6$. By (i), (ii), (iv), and (vi), we have that at most $2\nu n/15$ vertices in $C$ are untypical, which implies that all the other vertices in $C$ are $\nu$-unbalanced. Hence $|C|\leq (\tfrac{1}{4}-d+\tfrac{2\nu}{15})n$, and $|B|\geq(\tfrac{1}{2}+c-\frac{3}{4}\nu)n$, since the typical vertices in $C$ have at most $3\nu/4$ vertices outside of $B$. This also gives us $|D|\leq \nu n/6$, otherwise we would also have $|A|\geq(\tfrac{1}{2}+c-\frac{3\nu}{4})n$, and this contradicts with $A\cap B=\varnothing$.

Thus, we have $|A|\leq (\tfrac{1}{2}-c+\tfrac{7\nu}{12})n$, and actually this implies $|A|\leq 4\nu n/3$. The reason for this is, first by (iii), (v), (vii), and (viii), $A$ contains at most $4\nu n/3$ untypical vertices. By (v) and (vii), the typical vertices in $A$ have all but at most $\nu n/3$ of their neighbors in $A$ and $D$. But $|A\cup D|\leq (\tfrac{1}{2}-c+\tfrac{3\nu}{4})n$, which means that all the vertices in $A$ are untypical because of the disjointness of $A,B,C,D,M$. Therefore, we have $|A\cup C\cup D\cup M|\leq (\frac{1}{4}-d+2\nu)n$, and thus $|B|\geq(\frac{3}{4}+d-2\nu)n$.

Let $B'\subseteq B$ be the set of typical vertices in $B$, and let $C'\subseteq C$ be the set of typical vertices in $C$. Clearly, we have $|B\setminus B'|\leq \nu n$ and $|C\setminus C'|\leq \frac{2}{15}\nu n$. Let $K$ be a graph such that $V(K)=B'\cup C'$, and $e\in E(K)$ if $e$ is either a negative edge in $G[B']$ or a positive edge in $G[B',C']$. 
Now, $|V(K)|\geq (1-3\nu)n$, since besides $B\setminus B'$ and $C\setminus C'$, we have $|A|\leq 4\nu n/3$, $|D|\leq \nu n/6$, and $|M|\leq \nu n/3$. Also, for every $v\in V(K)$, $\delta_K(v)\geq(\tfrac{1}{2}+c-3\nu)n$. This is because, for every $u\in B'$, by (v) and the size of $D$ and $M$, all but at most $2\nu n/3$ neighbors of $u$ are in $B\cup C$. By (iii) and (viii), $u$ has at most $\nu n/6$ positive neighbors in $B$ and at most $\nu n/6$ negative neighbors in $C$. By the size of $B\setminus B'$ and $C\setminus C'$, we have $\delta_K(u)\geq (\tfrac{1}{2}+c)n-2\nu n-\tfrac{2\nu n}{15}$. Similarly, for every $w\in C'$, by (i), (ii), (iv), (vi), and the size of $M$ and $B\setminus B'$, $\delta_K(w)\leq (\tfrac{1}{2}+c)n-2\nu n-\tfrac{\nu n}{3}$. Therefore, $K$ contains a Hamilton cycle $\C$. Since $C'$ is an independent set in $K$ by (i),  the number of positive edges in $\C$ is at most $2|C'|\leq2|C|\leq(\frac{1}{2}-2d+\nu)n$.

Now we go back to $G$. Note that $\C$ is also a Hamilton cycle in $G[B'\cup C']$. In the final step, we are going to insert all the vertices in $V(G)\setminus (B'\cup C')$ to $\C$. Let $J=(B\setminus B')\cup (C\setminus C')\cup A\cup D\cup M$. We have $|J|\leq 3\nu n$. Then after we insert all vertices in $J$ to $\C$, we obtain a Hamilton cycle in $G$, which contains at most $(\frac{1}{2}-2d+\nu)n+2|J|=(\frac{1}{2}-2d+7\nu)n$ positive edges. Therefore, $G$ contains a Hamilton cycle with discrepancy at least $(4d-14\nu)n>2\nu n$.
\end{proof}


\section{Discrepancies in random $3$-regular graphs}\label{sec:ran}
\begin{proof}[Proof of Theorem~\ref{r3reg}]


Buser \cite{Buser} and later, in a much simpler paper, Bollob\'as \cite{BBregular} showed that random 
regular graphs have expanding properties. More precisely, let $$i(G):= \min_{\:U} \frac{|\partial U|}{|U|},$$
where $U \subset V(G)$ with $|U| \leq |V(G)|/2$, and $\partial U:=\{v\notin U\mid \exists u\in U, uv\in E(G)\}$. 
\smallskip

(i) Bollob\'as \cite{BBregular} proved that $i(G) \geq 2^{-7}$ for a random 3-regular graph $G$ with high probability. 
In particular, it is connected w.h.p..
\smallskip

(ii) Bollob\'as \cite{BB2} showed  for 
$3 \leq j \leq k$, where $k$ is fixed, and $X_j$ stands for the number of cycles of length $j$ in 
$G \in \mathcal{G}_{n, 3}$, that $X_3, \dots, X_k$ are asymptotically independent Poisson random 
variables with means $\lambda_j=2^j/(2j)$.

\smallskip

(iii)  Wormald proved 
(see \cite[Lemma 2.7]{W}) that for a fixed $d$ and every fixed graph $F$ with more edges than vertices,
$G \in \mathcal{G}_{n, d}$ a.a.s.~contains no subgraph isomorphic to $F$.

\medskip
 
Fix an arbitrary $f:E(G) \rightarrow \{-1, 1\}$,  denote $N$ and $P$  the subsets of {\em edges}, 
where $f$ takes $-1$ and $1$, respectively. We may assume that $|N| \leq |P|$, i.e., $|N|\le 3n/4$. 

Denote by $G^+$ the subgraph of $G$ spanned by $P$, and let $A_i$ be the set of components with 
size $i$ in $G^+$, while $a_i:=|A_i|$. The number of components in $G^+$ is $t=\sum_{i=1}^n a_i$.

Note that (i) means that $G$ is connected w.h.p.~so $G$ has a spanning tree $T$ satisfying that
$|E(T) \cap N| \leq t-1$.
Hence if  $t \leq (1/2-2^{-12})n+o(n)$ or $t \geq (1/2+2^{-12})n+o(n)$ then  $\D(G, \mathcal{T}_n) \geq 2^{-12}n-o(n)$.

Three edges of $N$ are incident to each element of $A_1$,  four edges to 
each of $A_2$.
 The number of edges incident to a component of size at least $3$   could  be less than 
four  only if the component contains a cycle, i.e.,  w.h.p.~only in $O(1)$ many components $A_i$ for  $i=3, \dots, 2^9$. For every component
larger than $2^9$, and smaller than $n/2$,  w.h.p. the number of incident edges is  at least four by (i). 

That is, w.h.p. $$2|N| \geq 3a_1 + 4\sum_{i=2}^n a_i -O(1)=4t-a_1-O(1),$$which gives 
\begin{equation}\label{eq:1}
(1/2-2^{-12})n+O(1) \leq t \leq |N|/2 +a_1/4 +O(1) \leq 3n/8 +a_1/4 +O(1).
\end{equation}

Now we consider the number of negative edges. The number of edges in $N$ which are incident to vertices in $A_1$ is
$e(G[A_1])+e(G[A_1,\overline{A}_1])$. Since $|N| \leq |P|$, we have $$\frac{3a_1}{2}\leq e(G[A_1])+e(G[A_1,\overline{A}_1])\leq|N|\leq\frac{3n}{4},$$ which implies that $a_1\leq n/2$. Using the condition (i), we have $e(G[A_1,\overline{A}_1])\geq 2^{-7}a_1$. Therefore,
\[
3a_1\leq2e(G[A_1])+e(G[A_1,\overline{A}_1])\leq 2|N|-2^{-7}a_1,
\]
implying $$\frac{3a_1}{2} +\frac{a_1}{2^7} \leq |N| \leq \frac{3n}{4}, $$ which gives  
$a_1 \leq (1/2 -2^{-10})n$ w.h.p. With (\ref{eq:1}) it implies $t \leq (1/2-2^{-12})n+o(n)$~w.h.p.
That gives us $\D(G, \mathcal{T}_n) \geq 2^{-12}n-o(n)$. w.h.p. 
\end{proof}

\section{Discrepancies of planar graphs}\label{sec:plan}

\begin{lemma}\label{cut} Let $C$ be a vertex cut of a connected graph $G$, that is $V(G)=A \cup B \cup C$ such that there 
are no edges between $A$ and $B$, and, say, $|A| \leq |B|$.  Then $\D(G, \mathcal{T}_n) \leq |B|-|A|+|C|$.
\end{lemma}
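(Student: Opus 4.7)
The plan is to exhibit a single labeling $f : E(G) \to \{-1,+1\}$ that works for every spanning tree at once. I would assign $-1$ to every edge with at least one endpoint in $A$ (that is, all edges of $G[A]$ together with all $A$-$C$ edges) and $+1$ to every remaining edge (inside $B$, between $B$ and $C$, or inside $C$). Writing $e_A(T)$ for the number of edges of a spanning tree $T$ incident to $A$, we get
\[
f(T) \;=\; -e_A(T) + \bigl((n-1) - e_A(T)\bigr) \;=\; (n-1) - 2e_A(T),
\]
so the whole problem reduces to sandwiching $e_A(T)$ between two values differing by about $|C|$.

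For the lower bound I would decompose $E(T)$ into the five classes $T_A, T_B, T_C, T_{AC}, T_{BC}$ according to where the endpoints lie. The subgraph $T[A]$ is a forest with, say, $k_A$ components, so $|T_A|=|A|-k_A$; because $G$ has no $A$-$B$ edges, each component of $T[A]$ must be attached to the rest of $T$ by at least one edge of $T_{AC}$. Hence $|T_{AC}|\geq k_A$ and $e_A(T)=|T_A|+|T_{AC}|\geq (|A|-k_A)+k_A=|A|$.

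For the upper bound I would use a contraction trick: collapse each connected component of $T[A]$, $T[B]$, and $T[C]$ to a single vertex. The resulting graph $T^{\ast}$ has $k_A+k_B+k_C$ vertices, and its edges correspond bijectively to $T_{AC}\cup T_{BC}$; no parallel edges arise, since any two parallel edges would close a cycle in $T$ through the corresponding connected subtrees, contradicting that $T$ is a tree. Therefore $T^{\ast}$ is itself a tree, giving $|T_{AC}|+|T_{BC}|=k_A+k_B+k_C-1$. Combining with the analogous bound $|T_{BC}|\geq k_B$ yields $|T_{AC}|\leq k_A+k_C-1$, and so $e_A(T)\leq (|A|-k_A)+(k_A+k_C-1)=|A|+k_C-1\leq |A|+|C|-1$.

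Plugging $|A|\leq e_A(T)\leq |A|+|C|-1$ into $f(T)=(n-1)-2e_A(T)$ and using $|A|\leq|B|$, one obtains $f(T)\in[\,|B|-|A|-|C|+1,\ |B|-|A|+|C|-1\,]$, and since $|B|-|A|\geq 0$ both ends have absolute value at most $|B|-|A|+|C|-1<|B|-|A|+|C|$. The only mildly subtle step is the contraction argument establishing the upper bound; once one accepts that quotienting a tree by a family of disjoint induced subtrees again produces a tree, the rest is bookkeeping.
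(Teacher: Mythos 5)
Your proof is correct and follows essentially the same approach as the paper: the labeling is the paper's labeling up to a global sign flip (the paper leaves $E(C)$ arbitrary, which your choice of $+1$ respects), and both arguments bound the number of tree edges on each side of the cut, the paper via counting components of $T$ restricted to $A\cup C$ and $B\cup C$, you via the equivalent count of edges incident to $A$. Your contraction argument just makes the paper's component-counting step more explicit.
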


\begin{proof}
Let $f(x, y)=1$ if $(x, y) \in E(A) \cup E(A, C)$, $f(x, y)=-1$ if $(x, y) \in E(B) \cup E(B, C)$ and 
arbitrary in $E(C)$. Every spanning tree $T$ of $G$ has at most $|C|$ components restricted to $A \cup C$.
It means the number of edges labeled by $1$ is at least $|A|+|C|-1-|C|=|A|-1$ in $T$, and the edges labeled
by $-1$ at most $|B|+|C|-1$. 
\end{proof}

\begin{proof}[Proof of Theorem~\ref{planar}]

To deduce Theorem~\ref{planar} we need to recall the celebrated planar separation theorem of Lipton and
Tarjan in \cite{LT}. It says if $G$ is a planar graph on $n$ vertices then $G$ has a vertex cut of size 
$O(\sqrt{n})$ partitioning the graph into two parts $A$ and $B$, where $n/3 \leq |A|, |B| \leq 2n/3$. A well-known consequence \cite[Theorem~5]{D82} of that theorem is that there
exists a cut $C$ and constants $c_1,c_2,c_3$ such that $n/2 - c_1\sqrt{n} \leq |A|, |B| \leq n/2 + c_2\sqrt{n}$ and $|C|=c_3\sqrt{n}$.

\smallskip       

Having the partition above we can use Lemma~\ref{cut} getting that for a planar graph $G$, 
$\D(G, \mathcal{T}_n) \leq |B|-|A|+|C| \leq O(\sqrt{n})$. 
\end{proof}

\begin{lemma}[\cite{Chvatalova}]\label{lem:Chvat}
Let $S\subseteq P_k \stime P_k$ such that $(k^2-k)/2\leq |S|\leq (k^2+k)/2$. Then we have $|\partial S|\geq k$.
\end{lemma}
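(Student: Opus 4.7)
The plan is to lower-bound $|\partial S|$ by combining projection arguments along rows and columns with the size hypothesis on $|S|$. For each row $R_i$ write $r_i=|S\cap R_i|$ and for each column $C_j$ write $c_j=|S\cap C_j|$, so that $\sum_i r_i=\sum_j c_j=|S|$. Call a row (or column) \emph{mixed} if $0<r_i<k$ (resp.\ $0<c_j<k$), and \emph{pure} otherwise. Since each row is a connected path on $k$ vertices, any mixed row contains a non-$S$ vertex adjacent, inside the row, to an $S$-vertex, and so contributes at least one vertex to $\partial S$. The contributions from distinct mixed rows lie in disjoint rows, and hence are themselves disjoint; therefore $|\partial S|\geq \#\{\text{mixed rows}\}$, and symmetrically $|\partial S|\geq \#\{\text{mixed columns}\}$.

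If either of these counts is at least $k$ we are immediately done, so assume both are strictly less than $k$. Then there must exist a pure row and a pure column; they share a vertex, which forces them to be pure of the same type (both full or both empty). In particular, any full row rules out an empty column, and conversely, which sharply restricts the admissible row/column patterns. I would then apply a standard compression/shifting step (first push the $S$-cells of every row to the left, then push the $S$-cells of every column to the top), replacing $S$ by a monotone down-set $S'$ of the same cardinality without increasing $|\partial S|$. Monotone down-sets inside the $k\times k$ grid are parametrised by integer partitions $\lambda_1\geq \lambda_2\geq\cdots\geq \lambda_k$ with $\lambda_i\leq k$, and for such a staircase the outer boundary $\partial S'$ can be read off from the ``outside corners'' of the Young diagram, namely from the indices $i$ at which $\lambda_i<k$ together with the consecutive differences $\lambda_i-\lambda_{i+1}$.

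The main obstacle is the final combinatorial bookkeeping: one must verify that for every partition $\lambda$ with $|\lambda|=\sum_i\lambda_i\in[(k^2-k)/2,(k^2+k)/2]$, the explicit outer-corner count is at least $k$. The plan is to express the boundary as the number of indices $i$ with $\lambda_i<k$ plus the ``bottom-edge'' contributions, and then to balance this count against the size identity $\sum\lambda_i=|S|$ using the constraint $|S|\geq (k^2-k)/2$. The tightest configurations are the horizontal and vertical strips $\lambda=(k,\ldots,k,0,\ldots,0)$, where $\partial S$ is a single full row (resp.\ column) of size exactly $k$; every other admissible partition yields at least as many outer corners, which completes the proof. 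The hypothesis $(k^2-k)/2\leq |S|\leq (k^2+k)/2$ is exactly what forces the diagram to be ``large enough in both directions'' to guarantee $k$ outside corners, since strictly smaller sets (e.g.\ small corner squares) can have vertex boundary below $k$.
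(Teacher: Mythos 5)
The paper does not actually prove this lemma --- it is quoted from Chv\'atalov\'a's paper on the bandwidth of $P_m\stime P_n$ --- so there is no internal proof to compare against and your argument has to stand on its own. As it stands it is a plan rather than a proof: the step you yourself identify as ``the main obstacle,'' namely verifying that every down-set of admissible size has vertex boundary at least $k$, is exactly the content of the lemma after the reductions, and it is not carried out. Two of the preparatory steps also need work, though they are repairable. First, compression is asserted, not justified: for the \emph{vertex} boundary one should argue via closed neighbourhoods, showing $|N[S']\cap R_i|\le |N[S]\cap R_i|$ row by row (using $|N[S]\cap R_i|\ge\max\{r_i+1,\,r_{i-1},\,r_{i+1}\}$ when $0<r_i<k$), and a single pass of row- then column-compression need not produce a down-set --- you must iterate and exhibit a decreasing potential such as $\sum_{(i,j)\in S}(i+j)$ to guarantee termination.

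The fatal problem is that your concluding claim --- that the horizontal and vertical strips are the tightest admissible configurations --- is false, so the bookkeeping cannot close. Take the staircase down-set $S=\{(i,j): j\le k+1-i\}$. Then $|S|=k(k+1)/2=(k^2+k)/2$, which lies in the admissible range, but $\partial S=\{(i,k+2-i): 2\le i\le k\}$ has only $k-1$ vertices; already for $k=2$ three vertices of the $2\times 2$ grid have vertex boundary a single vertex. In other words, the lemma as stated fails at the upper endpoint $|S|=(k^2+k)/2$, and any honest execution of your down-set analysis would run into this example rather than confirm the bound. To salvage the approach you would need to restrict the range to $|S|\le (k^2+k)/2-1$ (which is all the application in the proof of Theorem~\ref{grid} requires, since there $|P|\le k^2/2+k/8+2$) and then actually perform the outer-corner count for a general partition $\lambda$, showing that the only admissible down-set with boundary below $k$ is the full staircase. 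Without that computation, and with the extremal claim pointing at the wrong configurations, the argument is not yet a proof.
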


\begin{proof}[Proof of Theorem~\ref{grid}]
Assume there exists an $f:E(P_k \stime P_k) \rightarrow \{-1, 1\}$ such that $\D(P_k \stime P_k, \mathcal{T}_n, f) \leq k/4$.
Let $P, N$ and $M$ be the subset of vertices, such that $v\in P$ if all edges incident to $v$ are positive, $v\in N$ if all edges incident to $v$ are negative, and $M=V-N-P$.
Consider an arbitrary Hamiltonian path in $P_k \stime P_k$, from the assumption on $f$ it follows that $|P|, |N| \leq k^2/2 + k/8 +2$.

First, we show that $|M| \geq k $. If $\max \{|P|, |N|\} \leq (k^2-k)/2$ then this follows from 
$|P|+|N|+|M| =k^2$. That is we may assume $(k^2-k)/2 < |P| \leq k^2/2 + k/8 +2$. Note that $\partial P=M$. By Lemma~\ref{lem:Chvat}, for sets $P$ of such size we have $|\partial P|\geq k$, which means $|M| \geq k$, too.

We identify the vertices of $P_k \stime P_k$ with coordinate pairs such that $(0, 0)$ belongs to the bottom left vertex,
$(k-1, k-1)$ to the upper right vertex. For $r, s \in \{0, 1\}$ let $X_{r, s}$ be those vertices $(i, j)$
($0\leq i, j \leq k-1$) for which $i= r\pmod 2$ and $j = s\pmod 2$. At least one of these sets $X_{r, s}$ contains at least
$k/4$ vertices of $M$, say $X_{0, 0}$. Consider an arbitrary tree $T$ spanned on the vertices $X_{0, 1} \cup X_{1, 0} \cup X_{1, 1}$.

Note that we can extend $T$ to the entire $P_k \stime P_k$ such that the vertices of $X_{0, 0}$ will be leaf 
vertices in the extension. Moreover for $(i, j) \in X_{0, 0} \cap M$ we can connect $(i, j)$ to $T$ with either
an edge labeled by $-1$ or $1$. Fixing any extension to $X_{0, 0} \setminus M$, let $T^+$ ($T^-$) be the extension where
we use the edge labeled by $1$ ($-1$) for the vertices $ X_{0, 0} \cap M$. Obviously,  
$|\sum_{e \in T^+} f(e) - \sum_{e \in T^-} f(e)| \geq k/2$, so either $|\sum_{e \in T^+} f(e)|$ or 
$|\sum_{e \in T^-} f(e)|$ is at least $k/4$.   
\end{proof}

\begin{proof}[Proof of Proposition~\ref{partial}]

We show first that $\D(P_k \stime P_2, \mathcal {P}) \geq k/2$. Let us refer to the graph $P_k \stime P_2$
as a rectangle with horizontal length $k$ in which the edges are labeled by $f$. Let $X$ and $Y$ be the set of the vertical edges
labeled by $+1$ and $-1$ respectively. Without loss of generality, we may assume $|X| \geq |Y|$ and let $x:=|X| \geq k/2$, $y:=|Y|$. We consider four paths: $P(X)$ starts from the left-upper corner 
goes to right except when it meets an edge $e \in X$ at which point it goes down or up, depending on which one is 
possible. The path $P'(X)$ is almost the same, but it starts from the left-lower corner. 
Finally the paths $P(Y)$ and $P'(Y)$ are drawn analogously, those also start from left and go to right, but rise and
fall at the edges belonging to $Y$. Note that $P(X)$ and $P'(X)$ each contain $X$, $P(Y)$ and $P'(Y)$ each contain $Y$. $P(X)\cup P'(X)$ and $P(Y)\cup P'(Y)$ have the same set of horizontal edges.

Let $z_1:=\sum_{e \in P(X) \setminus X} f(e)$, and
 $z_2:=\sum_{e \in P'(X) \setminus X} f(e)$. If $\max \{z_1, z_2\} \geq 0$, then we are done since 
one of $\sum_{e \in P(X)} f(e)$ or $\sum_{e \in P'(X)} f(e)$ is at least $k/2$. If both $z_1$ and $z_2$ are
negative, we have 
$\D(P_k \stime P_2, \mathcal {P}, f) \geq x+z_1$, and $\D(P_k \stime P_2, \mathcal {P}, f) \geq x+z_2$. 
Considering the paths $P(Y)$ and $P'(Y)$ we also have $2\D(P_k \stime P_2, \mathcal {P}, f) \geq 2y-z_1-z_2$,
since the horizontal edges in those carry exactly $z_1+z_2$ negative surplus. Adding those up, we get
$4\D(P_k \stime P_2, \mathcal {P}, f) \geq 2x+2y$, that is 
$\D(P_k \stime P_2, \mathcal {P}, f) \geq k/2$ since $x+y=k$.

In the general case we may assume that $k \leq \ell$ and $P_k \stime P_\ell$ is referred as a rectangle
with $k$ rows and $\ell$ columns. We cut out $\lfloor k/2\rfloor$ non-touching stripes $P_2\stime P_\ell$. For every $f: E(P_k \stime P_\ell) \rightarrow \{-1, 1\}$, applying our construction of paths above, without loss of generality, at least half of the rectangles have a path with more positive edges, and with discrepancy at least $\lceil \ell/2\rceil$. Note also, that these paths can be joined into one path by adding at most $k-1$
edges. Thus, we create a path with discrepancy at least
\[
\bigg\lceil\frac{1}{2}\Big\lfloor \frac{k}{2}\Big\rfloor\bigg\rceil\Big\lceil \frac{\ell}{2}\Big\rceil-k+1> \frac{k\ell}{8}-\frac{\ell}{8}-k,
\]
and the result is proved.
\end{proof}

\noindent{\bf Remark.} Motivated by Theorem~\ref{thm:2.1} and $\D(K_n, k\cdot K_3) = n/5$ from \cite{BES} we think 
that for any $c>0$,  $\D(G, k\cdot K_3) = \Theta(n)$ provided that $v(G)=n$ and $\delta(G) \geq (3/4 + c)n.$


\begin{thebibliography}{0}

\bibitem{ABC} J. R. Alexander, J. Beck, and W. W. L. Chen, Geometric discrepancy theory and uniform distribution. 
In Handbook of Discrete and Computational Geometry. 1997.

\bibitem{BeckPath} J. Beck, On size Ramsey number of paths, trees, and circuits. I. 
{\em Journal of Graph Theory},  {\bf 7.1} (1983), 115--129.


\bibitem{BeckChen}
J. Beck and W. W. L. Chen, Irregularities of Distribution. Vol. {\bf 89} of
{\em Cambridge Tracts in Math.}, Cambridge University Press, 1987.



\bibitem{BB2} B. Bollob\'as, 
A probabilistic proof of an asymptotic formula for the number of labelled regular graphs. 
{\em European Journal of Combinatorics,} {\bf 1} (1980), 311--316.




\bibitem{BBregular} B. Bollob\'as, The isoperimetric number of random regular graphs. 
{\em European Journal of combinatorics}, {\bf 9.3} (1988), 241--244.



\bibitem{BES} S. A. Burr, P. Erd\H{o}s and J. H. Spencer,  
Ramsey theorems for multiple copies of graphs. 
{\em Transactions of the American Mathematical Society}, {\bf 209} (1975), 87--99.


\bibitem{Buser} P. Buser, On the bipartition of graphs, 
{\em Discrete Applied Math.} {\bf 9} (1984), 105--109.

\bibitem{Chvatalova} J. Chv\'atalov\'a, Optimal labelling of a product of two paths. 
{\em Discrete Mathematics}, {\bf 11(3)} (1975), 249--253.



\bibitem{D82} H. N. Djidjev, On the problem of partitioning planar graphs. \emph{SIAM Journal of Algebraic and Discrete Methods}, {\bf 3} (1982), 229--241.




\bibitem{EGPS} P. Erd\H{o}s, M. Goldberg, J. Pach and J. Spencer, 
Cutting a graph into two dissimilar halves.
{\em Journal of Graph Theory}, {\bf 12} (1988),  121--131. 



\bibitem{EFLS} P. Erd\H{o}s, Z. F\"uredi, M. Loebl and V. T. S\'os,
Discrepancy of Trees. {\em Stud Sci Math}, {\bf 30} (1995), 47--57.




\bibitem{KSS01}  J. Koml\'os, G. N. S\'ark\" ozy and E. Szemer\'edi,
Spanning trees in dense graphs. \emph{Combinatorics, Probability and Computing}, {\bf 10}
(2001), 397--416.


\bibitem{LT} R. J. Lipton and R. E. Tarjan,
A separator theorem for planar graphs. {\em SIAM Journal on Applied Mathematics,} 
{\bf 36} (1979), 177--189.

\bibitem{Posa} L. P\'osa,
On the circuits of finite graphs, {\em Magyar. Tud. Akad. Mat. Kutató Int. K\" ozl.} {\bf 8} (1963/1964), 355--361.




\bibitem{Weyl} H. Weyl,
\"Uber die Gleichverteilung von Zahlen mod Eins. {\em Math.  Ann.}, 77 (1916), 313--352.

\bibitem{W}
N. C. Wormald, Models of random regular graphs. 
{\em London Mathematical Society Lecture Note Series,} (1999), 239--298.


\end{thebibliography}
\end{document}